\numberwithin{equation}{section}
\theoremstyle{plain}
\newtheorem{theorem}{Theorem}
\newtheorem*{theorem*}{Theorem}
\newtheorem*{lemma*}{Lemma}
\newtheorem{lemma}{Lemma}[section]
\newtheorem{proposition}[lemma]{Proposition}
\newtheorem{corollary}[lemma]{Corollary}
\theoremstyle{definition}
\theoremstyle{remark}
\newtheorem{remark}[lemma]{Remark}
\newcommand{\NN}{\mathbb{N}}
\newcommand{\RR}{\mathbb{R}}
\newcommand{\R}{\mathbb{R}}
\newcommand{\US}{\mathbb{S}}
\newcommand{\sca}{\mathrm{sc}}
\newcommand{\Psisc}{\Psi_{\mathrm{sc}}}
\newcommand{\lra}{\longrightarrow}
\newcommand{\Rot}{\mathrm{Rot}}
\newcommand\Ell{\mathop{\rm Ell}}
\DeclareMathOperator{\WF}{WF}
\newcommand\supp{\mathop{\rm supp}}
\newcommand*{\defeq}{\mathrel{\vcenter{\baselineskip0.5ex \lineskiplimit0pt

                     \hbox{\scriptsize.}\hbox{\scriptsize.}}}%
                     =}
\newcommand{\vl}{\mathsf{l}}      
\newcommand{\Rp}{\mathcal{R}_+}
\newcommand{\Rm}{\mathcal{R}_-}
\newcommand\utilde{\tilde u}
\newcommand\vol{\mathrm{vol}}
\newcommand\Id{\operatorname{Id}}
\newcommand{\sw}{\mathsf{l}}   
\newcommand\X{\mathcal{X}}
\newcommand\Y{\mathcal{Y}}
\newcommand{\wmac}{F}
\definecolor{darkgreen}{cmyk}{1,0,1,.2}
\definecolor{m}{rgb}{1,0.1,1}
\definecolor{b}{rgb}{0,0.1,1}
\begin{document}
\title[Regularity of the nonlinear scattering matrix]{Regularity of the Scattering Matrix for Nonlinear Helmholtz Eigenfunctions}

\author{Jesse Gell-Redman}
\address{School of Mathematics and Statistics, University of Melbourne, Melbourne, Victoria, Australia}
\email{jgell@unimelb.edu.au}
\author{Andrew Hassell}
\address{Mathematical Sciences Institute, Australian National University, Acton, ACT 2601, Australia}
\email{Andrew.Hassell@anu.edu.au}
\author{Jacob Shapiro}
\address{Department of Mathematics, University of Dayton, Dayton, OH 45469-2316}
\email{jshapiro1@udayton.edu}

\thanks{The authors were supported in part by the Australian Research Council through grant DP180100589}

\begin{abstract}
 We study the nonlinear Helmholtz equation $(\Delta - \lambda^2)u = \pm |u|^{p-1}u$ on $\R^n$, $\lambda > 0$, $p \in \NN$ odd, and more generally $(\Delta_g + V - \lambda^2)u = N[u]$, where $\Delta_g$ is the (positive) Laplace-Beltrami operator on an asymptotically Euclidean or conic manifold, $V$ is a short range potential, and $N[u]$ is a more general polynomial nonlinearity. Under the conditions $(p-1)(n-1)/2 > 2$ and $k > (n-1)/2$,  for every $f \in H^k(\US^{n-1}_\omega)$ of sufficiently small norm, we show there is a nonlinear Helmholtz eigenfunction taking the form
\begin{equation*} 
u(r, \omega) = r^{-(n-1)/2} \Big( e^{-i\lambda r} f(\omega) + e^{+i\lambda r}
b(\omega) + O(r^{-\epsilon}) \Big), \qquad \text{as } r \to \infty, 
\end{equation*} 
for some $b \in H^k(\US_\omega^{n-1})$ and $\epsilon > 0$. That is, the nonlinear scattering matrix $f \mapsto b$ preserves Sobolev regularity,  which is an improvement over the authors' previous work \cite{ghsz20} with Zhang, that proved a similar result with a loss of four derivatives. 
\end{abstract}

\maketitle

\section{Introduction and statement of results}

We consider a Hamiltonian $H = \Delta_g + V$ defined on
$\mathbb{R}^n$, where $g$ is an asymptotically Euclidean Riemannian
metric in the sense defined below (an example is any smooth, compactly supported perturbation of the flat metric), and $V \in C^\infty(\mathbb{R}^n)$ is a real valued potential
function which is short range and satisfies symbolic estimates in the sense that
$$
| D_z^\alpha V(z) | \le C \langle z \rangle^{-\gamma - |\alpha|}
$$
for some $\gamma > 1$. 
We study the scattering problem for the nonlinear Helmholtz equation
\begin{equation}\label{eq:equation}
(H- \lambda^2) u = N[u], 
\end{equation}
for certain polynomial nonlinearities $N$. The admissible nonlinearities are defined below, but for now we note that examples include
$$
N[u] = \big(c_1 |u|^{p-1} + c_2 |\nabla u|^{p-1} \big) u, 
$$
where $p \geq 3$ is an odd integer. For prescribed, sufficiently small  data $f \in
H^k(\mathbb{S}^{n-1})$, we seek $u$ solving equation \eqref{eq:equation}, such that 
\begin{equation}\label{eq:asymptotic}
u(r, y) \sim r^{-(n-1)/2} \Big( e^{-i \lambda r}f(y) + e^{+i \lambda r}
b(y) \Big), \qquad b \in H^{k'}(\mathbb{S}^{n-1}). 
\end{equation}
Here $f$ is the ``incoming'' data and
$b$ is the ``outgoing'' data.  We refer to the association $f \mapsto
b$ as the ``nonlinear scattering matrix." In the linear setting, where $N[u] \equiv 0$, the map
$f \mapsto b$ is called the linear scattering matrix and denoted $S_{\mathrm{lin}}(\lambda)$. 
In this setting, it is a pseudodifferential operator of order zero on $\mathbb{S}^{n-1}$ composed with the antipodal map.

The results from \cite{ghsz20} include the following.
\begin{theorem*}\label{thm:old theorem}
 Assume that $(p-1)(n-1)/2 > 2$ and suppose that $k - 4 > (n-1) / 2$, $k \in \NN$.  Then there is $c > 0$ sufficiently
  small, such that for all $f \in H^{k}(\mathbb{S}^{n-1})$ with $\| f
  \|_{H^{k}} < c$, there is a solution $u$ to
  \eqref{eq:equation} satisfying \eqref{eq:asymptotic} with $k' = k - 4$.
\end{theorem*}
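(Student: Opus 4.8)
The plan is to reduce \eqref{eq:equation}--\eqref{eq:asymptotic} to a fixed-point problem governed by the \emph{linear} scattering theory for $H=\Delta_g+V$, and then to close a contraction estimate in a weighted Sobolev space adapted to the radial compactification of $\R^n$. First, recall the linear ingredients. Since $g$ is asymptotically Euclidean and $V$ is short range with $\gamma>1$, such $H$ has no positive eigenvalues and the limiting absorption principle holds at every energy $\lambda^2>0$; thus the boundary values $R(\lambda\pm i0)=(H-\lambda^2\mp i0)^{-1}$ exist as bounded maps between suitable weighted $L^2$ spaces, and $R(\lambda+i0)g$ is the unique solution of $(H-\lambda^2)v=g$ satisfying the outgoing radiation condition. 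There is likewise a Poisson operator $P_\lambda$ carrying $f\in H^k(\US^{n-1})$ to a generalized eigenfunction $w=P_\lambda f$, $(H-\lambda^2)w=0$, with
\[
  w=r^{-(n-1)/2}\bigl(e^{-i\lambda r}f(\omega)+e^{+i\lambda r}(S_\lambda f)(\omega)\bigr)+O(r^{-(n-1)/2-\epsilon}),
\]
where $S_\lambda\colon H^k(\US^{n-1})\to H^k(\US^{n-1})$ is the bounded linear scattering matrix. Writing the sought solution as $u=P_\lambda f+v$, equation \eqref{eq:equation} becomes $(H-\lambda^2)v=N[P_\lambda f+v]$; imposing that $v$ be purely outgoing (so that $u$ retains incoming data exactly $f$) amounts to the fixed-point equation $v=\Phi(v)$, $\Phi(v):=R(\lambda+i0)\,N[P_\lambda f+v]$.

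Next I would set up the function spaces. Working on the radial compactification of $\R^n$, introduce for the energy $\lambda$ a scale $X_k$ of weighted scattering Sobolev spaces modelling functions of the form $r^{-(n-1)/2}$ times an oscillatory profile whose angular data lies in $H^k(\US^{n-1})$, plus a faster-decaying remainder carrying $k$ derivatives along the scattering vector fields, with the weights tuned so that $P_\lambda\colon H^k\to X_k$ is bounded; and a companion space $Y$ with the same number of derivatives but heavier decay (of order $r^{-p(n-1)/2}$), into which the nonlinearity will map. The linear facts to establish are: (a) boundedness of $P_\lambda\colon H^k\to X_k$; (b) Agmon--H\"ormander-type weighted resolvent estimates together with microlocal propagation at the radial sets, yielding $R(\lambda+i0)\colon Y\to X_k$ bounded; and (c) a boundary-pairing map extracting the outgoing profile of $v=R(\lambda+i0)g$ --- in the flat model essentially the restriction of $\hat g$ to $\{|\xi|=\lambda\}$, with $V$- and geometry-dependent corrections --- which sends $g\in Y$ into $H^{k-4}(\US^{n-1})$.

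I would then prove a Moser-type estimate for the nonlinearity. Since $p$ is odd, each monomial of $N$ is an honest polynomial in $u$, $\bar u$ and their first derivatives, vanishing to order $\ge p\ge 3$ at the origin; and since $k>(n-1)/2$, the space $X_k$ forms an algebra up to its weight (the angular part being an $H^k$ on the $(n-1)$-sphere). Hence $N$ maps a small ball of $X_k$ into $Y$ with a gain of decay of order $r^{-(p-1)(n-1)/2}$, and the hypothesis $(p-1)(n-1)>4$ is what ensures this gained decay is enough for $R(\lambda+i0)$ to carry $Y$ back into $X_k$ (and for the outgoing profile to be a function on $\US^{n-1}$ rather than a distribution). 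Combining (a), (b) and the Moser estimate, $\Phi$ maps a small ball of $X_k$ into itself once $\|f\|_{H^k}$ is small; and because $N$ vanishes to order $\ge 3$, $\Phi$ is a contraction there. Its fixed point $v$ gives $u=P_\lambda f+v$, a solution of \eqref{eq:equation}. Reading off asymptotics, $P_\lambda f$ contributes $r^{-(n-1)/2}(e^{-i\lambda r}f+e^{+i\lambda r}S_\lambda f)$ and the purely outgoing $v=R(\lambda+i0)N[u]$ contributes $r^{-(n-1)/2}e^{+i\lambda r}c(\omega)$, each up to $O(r^{-(n-1)/2-\epsilon})$; so \eqref{eq:asymptotic} holds with $b=S_\lambda f+c$. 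As $S_\lambda f\in H^k$ while step (c) only gives $c\in H^{k-4}$, we obtain $b\in H^{k-4}=H^{k'}$.

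The hard part is exactly step (c): how much angular regularity of the outgoing profile $c$ is forced by the weighted-Sobolev norm of $N[u]$ once one passes through the limiting resolvent. Near the characteristic sphere $R(\lambda+i0)$ is not smoothing --- it only propagates regularity along the geodesic flow --- so a naive Agmon--H\"ormander restriction bound converts the $r^{-(p-1)(n-1)/2}$ decay and $k$ scattering derivatives of $N[u]$ into angular regularity of $c$ only after conceding the mismatch between radial and angular (weighted) derivatives and the unavoidable $1/2+\epsilon$ Agmon--H\"ormander losses, for a net cost of four derivatives. Every other ingredient --- the limiting absorption principle, the Poisson operator, the algebra and Moser estimates, the contraction --- is robust; the four-derivative loss is an artifact of this crude version of (c), and replacing it by a more precise parametrix for $R(\lambda+i0)$ near the outgoing radial set is what removes the loss and yields the sharp statement announced in the abstract.
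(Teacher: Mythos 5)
Your proposal follows essentially the same route as the paper (and the cited work \cite{ghsz20}): decompose the linear eigenfunction $u_0=P(\lambda)f$ into incoming and outgoing pieces, solve $v=R(\lambda+i0)N[u_0+v]$ by contraction in weighted scattering Sobolev spaces using the limiting absorption principle, radial-point/propagation estimates and an algebra-type estimate for the admissible nonlinearity under $k>(n-1)/2$ and $(p-1)(n-1)>4$, and then extract the outgoing angular profile of the resolvent term, which is where the four derivatives are conceded. Your spaces $X_k$, $Y$ are a leading-profile rephrasing of the module-regularity spaces $H^{s,l;k,k'}_{\pm}$ and your step (c) is the transport/boundary-pairing argument of Proposition~\ref{prop:reslimits}, so the outline is faithful to the paper's scheme.
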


The purpose of this article is to prove, as is expected from the inherent
symmetry in the determination of $b$ from $f$ and vice-versa, that the
value of $k'$ can be taken equal to $k$, i.e. that the nonlinear scattering
matrix preserves Sobolev regularity.

\begin{theorem}
 Assume that $p$ is an odd integer and $k$ is an integer satisfying 
 \begin{equation} (p-1)\frac{n-1}{2} > 2 \text{ and } k > \max(1, \frac{n-1}{2}).
 \label{pkcond}\end{equation}
 There is an $c > 0$ such that for all  $f
\in H^{k}(\mathbb{S}^{n-1})$  with $\| f
  \|_{H^k} < c$, there is a solution $u$ to
  \eqref{eq:equation} satisfying \eqref{eq:asymptotic} with $b  \in
  H^k(\mathbb{S}^{n-1})$.

Moreover, $u$ is unique in the sense of the main theorem of
\cite{ghsz20}, described in detail in Section 4, and the error term in
the asymptotic expansion \eqref{eq:asymptotic}, 
$$
E_r : = u - r^{-(n-1)/2} \Big( e^{-i \lambda r}f + e^{+i \lambda r} b \Big) $$
satisfies
\begin{equation} \label{topology of convergence}
 \| E_r \|_{H^{k - 2}(\mathbb{S}^{n-1})} =  O(r^{-(n-1)/2 - \epsilon}) \qquad \text{for some $\epsilon > 0$}.
\end{equation}

Assume further that the stricter inequality $(p - 1)(n -1)/2 > 3$ holds. Then we
have a decomposition $b = S_{\mathrm{lin}}(\lambda) f
+ b_1$ where $S_{\mathrm{lin}}(\lambda)$ is as defined above and $b_1 \in H^{k
  + 1}$.  Moreover, still for this stricter condition, for $j \in
\mathbb{N}$, if $f \in H^{k + j}(\mathbb{S}^{n-1})$ (in addition to
the smallness condition in $H^k$) then $b_1 \in H^{k + j + 1}(\mathbb{S}^{n-1})$, in particular, $f \in C^\infty(\mathbb{S}^{n-1}) \implies b \in C^\infty(\mathbb{S}^{n-1})$.

\end{theorem}

To elaborate on the uniqueness statement, we show that, for an
appropriate microlocalizing pseudodifferential operator $A_-$ (see \eqref{eq:main commutator}) to the
incoming radial set, with $u_- = A_- u_0$ and $u_0 = P(\lambda) f$ the
linear generalized eigenfunction with incoming data $f$, then $u -
u_-$ is uniquely determined in a small ball around the origin in the
Hilbert space $H_+^{s, - 1/2 - \delta; 1, k - 1}$ defined in Section \ref{sec:resolvent}.

Note the convergence in \eqref{topology of convergence} is in $H^{k-2}(\US^{n-1})$.  This reflects the well known phenomenon from the linear
setting whereby an asymptotic expansion for an incoming (or outgoing)
approximate eigenfunction is produced by computing successive terms
in a formal expansion in negative powers of $r$.  This process in general produces a
``distributional'' expansion, in which coefficients of higher
order terms have decreasing regularity.  For example, in flat Euclidean space, an incoming approximate
eigenfunction $(\Delta_0 - \lambda^2)u_- \in
\mathcal{S}(\mathbb{R}^n)$ with incoming data $f \in C^\infty(\US^{n-1})$
admits an asymptotic expansion
\begin{equation*}\begin{gathered}
u_- \sim r^{-(n-1)/2} e^{i\lambda r} \sum_{j=0}^\infty r^{-j} v_j(y),  \\
v_0 = f, \quad v_{j+1} =  \frac{1}{2i (j + 1) \lambda} \left(
\Delta_{\mathbb{S}^{n-1}} + \frac{(n-1)(n-3)}{4} - j(j+1)\right)
v_j. 
\end{gathered}\end{equation*}
 From this one sees immediately that with $f \in H^k$ one can
obtain a partial expansion of an approximate eigenfunction in which
each subsequent term has a coefficient two orders rougher than the
previous one.  It therefore seems very natural that the convergence in our
theorem takes place in $H^{k-2}(\mathbb{S}^{n-1})$. See also Remark~\ref{rem:conv}. 



Our methods extend to prove a generalization of this result in the
setting of
asymptotically conic manifolds.  These are Riemannian manifolds
$(M^\circ, g)$ where $M^\circ$ is the interior of a compact manifold $M$ with
boundary $\partial M$ and $g$ is a so-called scattering metric, meaning it
takes the form 
\begin{equation}\label{ac-metric}
g = \frac{dx^2}{x^4} + \frac{h(x, y, dy)}{x^2},
\end{equation}
in a neighborhood of $\partial M$ where $x$ is a boundary defining function, i.e.\ $\partial M = \{ x
= 0 \}$ and $x \ge 0$ has that $dx$ is nonvanishing over $\partial M$,
and $y$ are coordinates on $\partial M$.  Here $h$ is a smooth $(0,2)$-tensor that restricts to a metric on
$\partial M$. Flat Euclidean space
is an example of an asymptotically conic space; write $M^\circ =
\mathbb{R}^n$ and include $\mathbb{R}^n \hookrightarrow
\overline{\mathbb{R}^n} =  \{ w \in \mathbb{R}^n :
|w|\le 1 \} =:M$ where the inclusion can be realized by the map $z
\mapsto z / (1 + \langle z \rangle)$ and note that the metric form is
realized by writing the flat
metric in polar coordinates and setting $x = 1/r$.

In the Euclidean case, $\partial M$ is the sphere
  $\mathbb{S}^{n-1}$ at infinity with its standard metric, and $h$ is independent of $x$. In general, if $(\partial M, h(0))$ is the sphere with its standard metric, then we
  call $(M^\circ,g)$ an asymptotically Euclidean metric. 

On a general asymptotically conic manifold, writing $r = 1/x$ one
obtains an analogue of the radial variable in this more general
context, and the metric then takes the form near infinity 
\begin{equation}\label{ac-metric2}
g = dr^2 + r^2 h(\frac1{r}, y, dy). 
\end{equation}

The admissible nonlinearities are those $N[u] \defeq N(u, \overline{u}, \nabla u, \nabla \overline{u}, \nabla^{(2)}
u, \nabla^{(2)} \overline{u})$ which are a sum of monomial terms, of degree not less than $p$,
 in $u$ and $\overline{u}$ and their derivatives up to order two, with
coefficients smooth on $M$. Moreover, we require $p$ to satisfy the first condition in \eqref{pkcond}. 
  

\begin{theorem}[Main Theorem, asymptotically conic case]\label{thm:main2}
Let $(M^\circ, g)$ be an asymptotically conic manifold of dimension
$n$, and let $V$ be a conormal short range potential, that is, a smooth potential on $M^\circ$ satisfying estimates near infinity of the form 
\begin{equation}
\Big| (r D_r)^j D_y^\alpha V(r, y) \Big| \leq C \langle r \rangle^{-\gamma} \text{ for all } j \geq 0, \ \alpha \in \mathbb{N}^{n-1}
\label{Vests}\end{equation}
for some $\gamma > 1$. 
Let $H = \Delta_g + V$ where $\Delta_g$ is the Laplace-Beltrami operator on
$(M^\circ, g)$. Let $N[u]$ be an admissible nonlinearity, and let $p$ and $k$ be
integers satisfying \eqref{pkcond}. 
 There exists $c
> 0$ sufficiently small, such that 
for every $f \in H^{k}(\partial M)$ with $\| f
\|_{H^{k}(\partial M)} < c$,  there is a solution $u$ to 
$$
(H - \lambda^2) u = N[u]
$$
on $M^\circ$ satisfying
\begin{equation}\label{eq:asymptotic real}
u(r, y) = r^{-(n-1)/2} \Big( e^{-i \lambda r}f(y) + e^{+i \lambda r}
b(y) + O_{H^{k -2}}(r^{-\epsilon}) \Big) 
\end{equation}
for some $b \in H^{k}(\partial
M)$ and some $\epsilon > 0$. 

Assume further that the stricter inequality $(p - 1)(n -1)/2 > 3$ holds and that the nonlinearity $N[u]$ involves derivatives
up to order one (instead of two as allowed above). Then we
again have the decomposition $b = S_{\mathrm{lin}}(\lambda) f + b_1$ with $S_{\mathrm{lin}}(\lambda) f$ again the
linear scattering matrix (now an FIO associated to geodesic flow for
time $\pi$ \cite{mezw96}) and $b_1 \in H^{k +
  1}(\partial M)$.  Again, if $f \in H^{k
  + j}(\partial M)$ (in addition to the smallness condition in $H^k$) then $b_1 \in H^{k + j + 1}(\partial M)$.

\end{theorem}

For ease of exposition, we return to the Euclidean case in the remainder of this introduction. 
Given $f \in L^2(\mathbb{S}^{n-1})$, the 
linear solution $u_0$ to $(H - \lambda^2) u_0 = 0$ with incoming data
$f$ is the image of $f$ under the \emph{incoming Poisson
operator}, $P(\lambda)$.  This $u_0$ can be written (non-uniquely) as a
sum of incoming and outgoing terms $u_0 = u_- + u_+$, where, roughly
speaking, $u_\pm \sim r^{-(n-1)/2} e^{\pm i r \lambda} f_\pm$ as $r$
goes to infinity.  For adequate decompositions $u_\pm$, solutions $u$ to \eqref{eq:equation}
satisfying \eqref{eq:asymptotic} can be constructed by a contraction
mapping argument in which one writes $u = u_- + w  = u_0 + (w - u_+)$
where $w$ will be outgoing (in a sense to be made precise below) 
and a fixed point of the mapping
$$
\Phi(w) = u_+ + R(\lambda + i0)N[u_- + w],
$$
where $R(\lambda + i 0)$ is the outgoing resolvent, that is, $(H - (\lambda + i0)^2)^{-1}$.  The true
nonlinear solution therefore satisfies
\begin{equation}
  \label{eq:true solution fixed}
  u = u_0 + R(\lambda + i0)N[u].
\end{equation}
At issue in this
paper is the regularity of the outgoing data of $w$, and how this can
be understood in terms of the mapping properties of the Poisson
operator, the decomposition $u_\pm$,  and the resolvent.

The construction of the nonlinear eigenfunction $u$ requires a
decomposition $u_0 = u_- + u_+$ for the free solution, and we improve
on the reults in \cite{ghsz20} by using the Schwartz kernel of the Poisson operator
$P(\lambda)$ (see Section \ref{sec:Pois}) to obtain a decomposition
with optimal regularity.  Indeed, the Poisson operator is given by the action of a
well understood oscillatory integral kernel, due to \cite{mezw96} in the asymptotically conic case, 
with many antecedents in the Euclidean case, e.g. \cite{Ik60, Is82}. 
Using this we prove a relationship between
the regularity of the incoming data $f \in H^k(\mathbb{S}^{n-1})$ for $k \ge
0$ and the
corresponding linear generalized eigenfunction $u_0$.  This is best understood using the theory of scattering
pseudodifferential operators  \cite{me94}.  It is well-known that
$u_0$ lies in weighted Sobolev spaces $H^{s, - 1/2 - \epsilon}(\mathbb{R}^n)$,
where $s \in \mathbb{R}$ and $\epsilon > 0$ are arbitrary.  This means
that
$$
\langle z \rangle^{- 1/2 - \epsilon} u_0 \in H^s(\mathbb{R}^n),
$$
where $H^s(\mathbb{R}^n)$ are the standard $L^2$-based Sobolev spaces on $\mathbb{R}^n$.  
(In particular, $u_0$ is a
smooth function in the interior, a consequence of elliptic regularity.)  What we prove below is that $u_0$ can be decomposed into $u_0 = u_-
+ u_+$ where each $u_\pm$ has $k$ additional order of `module
regularity', specifically each remains in $H^{s, -1/2-\epsilon}$
after application of $k-$fold combinations of 
angular derivatives and the radial annihilators $r(D_r\mp \lambda)$ of the oscillatory factors $e^{\pm i \lambda r}$.  As
we describe below, these operators are determined directly by the
microlocal structure of the problem; they comprise the modules
$\mathcal{M}_\pm$ of scattering pseudodifferential operators in
$\Psisc^{1,1}$ which are characteristic on the incoming/outgoing
($-$/$+$) \textit{radial} sets $\mathcal{R}_\pm$ of the operators $H - \lambda^2$, thought of as
a \emph{non-elliptic} scattering operator with non-degenerate
characteristic set over spatial infinity.

The paper is organized as follows.  In Section 2 we recall the basic
definitions and structures that will be used in the paper, including
scattering pseudodifferential operators and
the weighted Sobolev spaces between which they act.  We also recall there
the definitions of the module regularity spaces used in \cite{ghsz20}
and the crucial mapping properties of the resolvent between such
spaces.  In Section 3 we discuss the mapping properties of the Poisson
operator.  These properties are very closely related to mapping properties of the
incoming and outgoing resolvents, due to formula \eqref{sm}. 
The key result
is Proposition \ref{prop:reslimits} which shows that the resolvent applied to certain functions admit 
asymptotic expansions corresponding to one of the terms in \eqref{eq:asymptotic real}. The PDE-type
argument, however, does not give the optimal regularity for the leading coefficient. The optimal regularity is obtained 
in Propositions~\ref{prop:limit-reg} and \ref{prop:reg}, which relates the map taking a function $F$ to the leading expansion of
$R(\lambda \pm i0) F$ to the \emph{adjoint} of the Poisson operator, $P(\mp \lambda)^*$,  applied to $F$.   In
Section 4 we put these results together to prove the main theorem.

Our analysis of the Poisson operator is based on the description of
its Schwartz kernel as an oscillatory integral, which is due to Melrose-Zworski \cite{mezw96}.  The powerful
tools from microlocal scattering theory we employ were developed in 
\cite{me94} (radial point estimates), \cite{HMV04} (test modules), \cite{Va13} (Fredholm theory for real principal type operators on anisotropic Sobolev spaces) and 
\cite{va18} (radial point estimates and Fredholm theory in the scattering calculus), and we refer the reader to
\cite{ghsz20} for the detailed discussion of related literature.


\section{Microlocal analysis of the resolvents
  $R(\lambda \pm i 0)$}

We review the relevant objects here only briefly  as they are discussed
in detail in other work.  A detailed introduction to scattering
differential operators on $\mathbb{R}^n$ can be found in Vasy's
minicourse notes \cite{va18},
while the more general development on scattering manifolds is due to
Melrose \cite{me94}.  See also Sections 2 and 3 of \cite{ghsz20}.

\subsection{Weighted Sobolev spaces and scattering pseudodifferential
  operators}

We confine most of our introductory discussion to the case of
Euclidean space.  Letting $\mathcal{S}(\mathbb{R}^n)$ denote the space of Schwartz
functions and $\mathcal{S}'$ the tempered distributions,
each $u \in \mathcal{S}'$ lies in some weighted $L^2$-based Sobolev
space
\begin{equation}
  \label{eq:weighted spaces}
  H^{s,l}(\mathbb{R}^n) = \langle z \rangle^{- l} H^m(\mathbb{R}^n),
\end{equation}
for $s, l \in \mathbb{R}$.

Recall the scattering symbols and
scattering pseudodifferential operators, defined for $m, l \in
\mathbb{R}$ by
\begin{equation*}
  \begin{gathered}
 S^{s, l}(\mathbb{R}^n) = \{ a(z, \zeta) \in C^\infty(\mathbb{R}^n_z
 \times \mathbb{R}^n_\zeta) : \| a \|_{s, l ; N} < \infty  \mbox{ for
   all } N \in \mathbb{N}_0\} \\
 \Psisc^{s, l}(\mathbb{R}^n) = \{  \text{Op}(a)  : a \in S^{s, l}(\mathbb{R}^n)  \},
  \end{gathered}
\end{equation*}
where
$$
\| a \|_{s, l; N} = \sum_{|\alpha| +  |\beta| \le N} \sup_{z, \zeta} | \langle z
\rangle^{-l + |\alpha|}\langle \zeta \rangle^{-s + |\beta|} D_z^\alpha
D_\zeta^\beta a(z, \zeta)|,
$$
Here $\text{Op}(a)$ denotes the operator with integral kernel $\int e^{ - i (z
  - z')} a(z, \zeta) d\zeta$. Thus the
  scattering pseudodifferential operators are by definition (here) the
  left quantizations of scattering symbols.
  
  There is a notion of principal symbol attached to scattering
  PsiDO's which includes their leading order behavior at spatial infinity.  The (filtered) algebra of
  scattering PsiDO's admits a natural mapping to the graded algebra of
  scattering symbols, 
  $$
\Psisc^{s, l} \lra S^{[s, l]} := S^{s, l} / S^{s - 1, l - 1}.
$$
Given $A \in \Psisc^{s, l}$ its principal symbol will be denoted by
$\sigma_{s, l}(A)$.

We work here exclusively with classical scattering symbols, which are
functions $a(z, \zeta)$ with joint asymptotic expansions in $z, \zeta$
as $|z|, |\zeta| \to \infty$.  We will need to construct explicit
symbols whose corresponding operators will function as microlocal
cutoffs.  In general, classical symbols can be written
  $$
a \equiv \langle z
\rangle^{l}\langle \zeta \rangle^{s} a_0(z, \zeta) ,
  $$
where $a_0(z, \zeta)$ is bounded and is smooth in $r^{-1} =
|z|^{-1}$ and $|\zeta|^{-1}$.  This
characterization of the asymptotic behavior of $a_0$ is
equivalent to the statement that $a_0$ extends to an element in
$C^\infty(\overline{\mathbb{R}^n} \times \overline{\mathbb{R}^n})$
where $\overline{\mathbb{R}^n}$ is the radial compactification of
$\mathbb{R}^n$ in which the map $z \mapsto z / (1 + \langle z
\rangle)$ realizes $\mathbb{R}^n$ as the interior of the unit ball
$\mathbb{B}^n$, and the overline notation denotes this entire structure (the
map from Euclidean space into the ball as opposed to just the ball.)

For $[a] \in S^{[s, l]}$, i.e.\ $[a]$ the principal symbol of a
scattering PsiDO, the value of $a_0$ is determined here only to
the addition of elements in $S^{s-1, l -1}$, so in particular, in sets of bounded frequency,
$|\zeta| < C$,  with $\supp (\chi \colon \mathbb{R}_{t} \to \mathbb{R})
\subset \{ |t| \le C \}$,
$$
\chi(\zeta) (a_0(z, \zeta) - a(\hat z , \zeta))  \equiv 0 \qquad  (\mbox{mod } S^{s
  -1, l - 1}),
$$
with a similar expression in the $|z| < C$ regions.  
An $A \in \Psisc^{s, l}$ is by definition \emph{scattering elliptic} if its
principal symbol is invertible in the graded algebra $\bigcup_{s, l} S^{[s,l]}$.  The microlocal notion
of scattering ellipticity will be used as follows.  On regions of
large frequency $|\zeta| > C$, scattering ellipticity is implied by
the uniform estimate
$$
\sigma_{s, l}(A)(z, \zeta) \ge C \langle z \rangle^l \langle \zeta
\rangle^s \mbox{ for } |\zeta| > C.
$$
Below, the operators of interest are (scattering) elliptic on such large frequency
regions but are not globally scattering elliptic, hence the more
detailed estimates coming from propagation
phenomena will arise from analysis on sets of bounded frequency, whence we will
typically need only to discuss the ``spatial'' principal symbol of $A
\in \Psisc^{m,l}$, defined for classical scattering symbols by
$$
\sigma_{\text{base},\,  l}(A)(\hat z, \zeta) = \lim_{r \to \infty} \langle z
\rangle^{-l} a(z, \zeta).
$$
Note that the behavior in $\zeta$ of this function is in general symbolic but not
homogeneous.  In this region, (scattering) ellipticity is the same familiar
ellipticity but in the $z$ directions; $(\hat z_0, \zeta_0)$ is in the
elliptic set if $\sigma_{s,l}(A)(z, \zeta) \ge C \langle z \rangle^l$
in a region $|z| > C, |\hat z - \hat z_0|, |\zeta - \zeta_0| <
\epsilon$.  It is straight forward to check that this is equivalent to
$\sigma_{\text{base},\,  l}(A)(\hat z_0, \zeta_0) \neq 0$.

The basic boundedness property of scattering
pseudodifferential operators is that for $A \in \Psisc^{s', l'}$,
\begin{equation}
  \label{eq:bounded psidos}
  A \colon H^{s, l} \lra H^{s -s', l - l'} \mbox{ is bounded.}
\end{equation}
The residual operators are
$$
\Psi^{- \infty, - \infty} = \bigcap_{m , l} \Psisc^{m, l},
$$
which have Schwartz kernels in $\mathcal{S}(\mathbb{R}^n \times
\mathbb{R}^n)$, and in particular $A \in \Psisc^{- \infty, -\infty}$ is bounded
between \emph{any} two weighted $L^2$-based Sobolev spaces; in
particular since $H^{s, l} \subset H^{s', l'}$ for any $s > s', l >
l'$, residual $A$'s define compact operators on $L^2$.
If $A$ is globally scattering elliptic then the map in \eqref{eq:bounded psidos} is
Fredholm, since in that case there is an approximate inverse
$B \in \Psisc^{-s, -l}$ such that $\Id - AB, \Id - BA \in \Psisc^{-
  \infty, -\infty}$.

All of the foregoing material extends directly to the general
asymptotically conical case; see \cite[Section 2.2]{ghsz20} for
further details.  In the general case, one has the fiberwise radial
compactification of the scattering cotangent bundle ${}^{\sca}
\overline{T}^* M$ which is a manifold with corners of codimension
  $2$; there are boundary defining fuctions $x$ for spatial infinity
  and $\rho$ for fiber infinity (which in the Euclidean case can be
  taken to be $\langle z \rangle^{-1}$ and $\langle \zeta
  \rangle^{-1}$, respectively) and the symbol estimates are
  essentially the same as the Euclidean space symbol estimates written in
  terms of $x$ and $\rho$.  The scattering pseudodifferential
  operators $\Psisc^{s,l}(M)$ are the quantizations of these symbols,
  and the scattering Sobolev spaces, denoted $H^{s, l}_{\sca}(M)$,
  consist of distributions $u$ with $A u \in
L^2(M)$ for all $A \in \Psisc^{s,l}(M)$.

\subsection{Mapping properties of $H - \lambda^2$ and the resolvent}\label{sec:resolvent}
Analysis of $H - \lambda^2$ as a scattering differential operator was
first carried out by Melrose in \cite{me94}. We review the relevant
material again on $\mathbb{R}^n$.  We have
$H  - \lambda^2 \in \Psisc^{2,0}$, with
$$
\sigma_{2,0}(H  - \lambda^2) = |\zeta|^2 + V -\lambda^2 \ge C\langle \zeta \rangle^2, \quad \text{for $|\zeta| > C$}.
$$
In the general asymptotically Euclidean case, we get that the spatial principal symbol is
rather simple:
\begin{equation}
  \label{eq:scat fib symb helm}
  \sigma_{\text{base},\,  2}(H- \lambda^2)(\hat z, \zeta) = |\zeta|^2 - \lambda^2. 
\end{equation}
\textit{In particular $H - \lambda^2$ is not globally scattering elliptic.}  Its
characteristic set $\Sigma$, which is a subset of $\mathbb{S}^{n -
  1}_{\hat z} \times \mathbb{R}^n_\zeta$, is the vanishing locus of
the fiber principal symbol:
\begin{equation}
  \label{eq:dumb char set}
  \Sigma = \{(\hat z, \zeta) :  |\zeta|^2 - \lambda^2  = 0 \}.
\end{equation}

To understand the action of the flow of the Hamilton vector field on
$\Sigma$, we can work in polar coordinates $(r, y)$, where
$y$ are arbitrary coordinates in a coordinate patch on the sphere $\mathbb{S}^{n - 1}$;
these automatically give rise to dual coordinates $(\nu, \eta)$, and
we rescale the angular dual variable $\eta$ by writing $r \mu
= \eta$.  Via this localization and rescaling we effectively pass to the case of
an arbitrary asymptotically conic manifold.

Noting that in these variables we have 
$$
|\zeta|^2 = \nu^2 + |\mu |_h^2,
$$
 where $| \mu |_h$ denotes the norm with respect to the dual of metric $h$ in the definition of the asymptotically conic metric $g$. Apart from giving the obvious rewriting of the
characteristic set as $\Sigma = \{ \nu^2 + |\mu |_h^2 =
\lambda^2 \}$, it clarifies the behavior of the Hamilton vector field,
which, recall, is defined with respect to arbitrary coordinates and
their canonical dual coordinates to be
$$
H_p := \frac{\partial p}{\partial \nu} \frac{\partial}{\partial r}
- \frac{\partial p}{\partial r} \frac{\partial}{\partial \nu} +
\sum_{j = 1}^{n-1} \Big( \frac{\partial p}{\partial \tilde \mu_j} \frac{\partial}{\partial y_j}
- \frac{\partial p}{\partial y_j} \frac{\partial}{\partial \tilde
  \mu_j} \Big).
$$
Scattering calculus propagation results are phrased in terms of the
natural conformal rescaling (or reweighting) of this vector field, namely $\mathsf{H}_p  := \langle z \rangle H_p,$
and with $p = \sigma_{2, 0}(H - \lambda^2)$, we obtain 
\begin{equation}\label{flat-Hvf}
\mathsf{H}_p = -2\nu (x \partial_x + R_\mu) + 2|\mu|_h^2 \partial_\nu
+ H_{\partial M, h},
\end{equation}
where $R_\mu$ is the radial vector field in $\mu$ and $H_{\partial
  M, h}$ is the Hamilton vector field on $\partial M$ for the metric
$h$.  (Note that the apparent discrepancy in sign between
\eqref{flat-Hvf} and \cite[Eq.\ 8.19]{me94} is due to our usage of
$\nu$, the dual variable to $r$, in contrast with Melrose's use of
$\tau = - \nu$.)

From this one deduces that the two submanifolds
\begin{equation}
  \label{eq:radial sets}
  \mathcal{R}_\pm := \{ \mu = 0 = x, \ \nu = \pm \lambda \}
\end{equation}
are sinks ($+$) or sources ($-$) for the (rescaled)
Hamilton flow, i.e.\ the flow of $\mathsf{H}_p$, on the characteristic
set  $\Sigma$.  The well-known formula for the principal symbol of a
commutator of pseudodifferential operators also extends to the
scattering setting, namely if $A \in \Psisc^{s, l}, B \in \Psisc^{s',
  l'}$ with $a = \sigma_{s, l}(A), b = \sigma_{s', l'}(B)$, then $[A, B] \in \Psisc^{s + s' - 1, l + l' - 1}$ and
$$
\sigma_{s + s' - 1, l + l' - 1}(i [A, B]) = \{ a, b \} = H_a b,
$$
where here $\{ \cdot, \cdot \}$ denote the Poisson bracket.

The most important example of the use of commutators for us will be
those of the form $[H ,Q]$ where $H$ is the Hamiltonian and $Q \in
\Psisc^{0,0}$, in particular we will take $Q$ to have various
microsupport properties (discussed below) which will allow them to act
as microlocal cutoffs to various portions of scattering phase space.
Particularly
simple examples include symbols of the form
$$
q(x, y, \nu, \mu) = \chi(x) \chi((|\mu|_h^2 + \nu^2)/ 2 \lambda^2)
\phi(\nu), \qquad \phi \in C^\infty_c(\R^n).
$$
where $\chi$ is a smooth 
function with $\chi(t) =1$ for $t \le 1$ and
$\chi(t) = 0$ for $t \ge 2$.  (This $q$ is then a function whose
support lies in a neighborhood of $\Sigma$ and whose restriction to $\Sigma$
depends only on $\nu$.)  Then in fact $Q \in \Psisc^{-\infty, 0}$ and
$[H, Q] \in \Psisc^{-\infty, -1}$, whence
\begin{equation}
  \label{eq:3}
  \begin{gathered}
\langle z \rangle  \sigma_{- \infty, -1}(i [H, Q]) = \mathsf{H}_p q =
2|\mu|_h^2 \phi'(\nu) \mbox{ in the region } |x| < 1, \ |\mu|_h^2 +
\nu^2 < 2 \lambda^2,\\
\mbox{and } \langle z \rangle \sigma_{- \infty, -1}(i [H, Q]) = 0 \mbox{ for } |\mu|_h^2 + \nu^2 \ge 2\lambda.
\end{gathered}
\end{equation}

We can now define the modules of pseudodifferential operators
$\mathcal{M}_\pm$ used to measure the regularity of distributions in
the domain and range of our formulation of the resolvent mapping.
Specifically
\begin{equation}
  \label{eq:2}
  \mathcal{M}_\pm := \{ A \in \Psisc^{1,1} : \mathcal{R}_\pm \subset \Sigma_{1,1}(A) \},
\end{equation}
or, in words, $\mathcal{M}_+$ is the vector space of scattering
pseudodifferential operators of order $(1,1)$ which are characteristic
on the ``outgoing'' radial set $\mathcal{R}_+$ and $\mathcal{M}_-$ is
the same for $\mathcal{R}_-$.  Locally an element $A \in
\Psisc^{1,1}$ lies in $\mathcal{M}_\pm$ if and only if it can be written
$$
A = B_0 r(D_r \mp \lambda) + \sum_{i = 1}^{n-1} B_i  D_{y_i} + B'
$$
with $B_0, B_i, B' \in \Psisc^{0,0}$.  Note that $\mathcal{M}_+$ and $\mathcal{M}_-$
both contain the identity operator. Also, $\mathcal{M}_\pm$ contain a elliptic element of
order $(1,1)$, namely the radial operator $r(D_r \mp \lambda)$, at the opposite radial set
$\mathcal{R}_\mp$. 

We also need the `small' module of angular derivatives
$$
\mathcal{N} = \{ \sum_{i = 1}^{n-1} B_i  D_{y_i} + B' : B_i, B' \in \Psisc^{0,0} \} = \mathcal{M}_+ \cap \mathcal{M}_-. 
$$

Theorem 2.7 of \cite{ghsz20} then gives the following mapping property for
the resolvent.  Let $s, l \in \mathbb{R}, \kappa, k \in \mathbb{N}_0$, and define
\begin{equation}
  \label{eq:mod reg space defs}
  \begin{gathered}
    \mathcal{Y}^{s, l; \kappa, k}_\pm := H^{s, l ; \kappa, k}_\pm = \{ u \in H^{s, l}
    : \mathcal{N}^{k}\mathcal{M}_\pm^\kappa u \subset H^{s, l} \} \\
    \X^{s, l; \kappa, k}_\pm := \{ u \in  H_\pm^{s, l; \kappa, k} : (H -
    \lambda^2) u \in H_\pm^{s - 2, l + 1 ; \kappa, k} \}.
  \end{gathered}
\end{equation}
Then
\begin{equation}
  \label{eq:res map mod reg}
 R(\lambda \pm i0) \colon \mathcal{Y}_\pm^{s, l + 1;  \kappa, k} \lra
  \X_\pm^{s + 2 , l ; \kappa, k} \mbox{ provided } k \ge 1, \ l \in (-\frac1{2} -
  k, -\frac1{2}).
\end{equation}
Here, the value $l = -1/2$ is referred to as the ``threshold'' value; it is the critical spatial
order for which we need different ``radial propagation estimates'' for $l$ greater than vs. 
less than $-1/2$. See \cite[Section 3]{ghsz20}. The condition in \eqref{eq:res map mod reg}
is that $l$ is below threshold, but that $l + k$ is above threshold. That means that module
regularity for $\mathcal{M}_\pm$ of order $k$ gives above threshold regularity at the opposite
radial set $\mathcal{R}_\mp$, which is the key to obtaining estimates such as \eqref{eq:res map mod reg}.
This statement can be rephrased as follows: for such $k \in \mathbb{N}$ and $l \in \mathbb{R}$,
the operator $H - \lambda^2$ is an isomorphism from 
  $\X_\pm^{s , l ; k, k'}$ to $\mathcal{Y}_\pm^{s - 2, l + 1;
    k, k'}$, and its inverse is the resolvent $R(\lambda \pm i0)$.

The reason for introducing the small module $\mathcal{N}$ to treat
the nonlinear problem is explained in detail in the introduction of
\cite{ghsz20}.  We remark here only that for multiplication of
distributions in $H_+^{s, l; \kappa, k}$, large module regularity produces loss in decay -- see
e.g. \cite[Cor.\ 2.10]{ghsz20} from which one has $H_+^{s, l; \kappa, k}
\cdot H_+^{s, l; \kappa, k} \subset H_+^{s, 2l + (n/2)  - \kappa; \kappa, k}$.  Small
module regularity is used to minimize this loss of $\kappa$ in the spatial order.

  \subsection{Microlocalization}\label{sec:micro}

  It will be important to microlocalize distributions both near to and
away from the radial set.  To this end we recall some features of the operator wavefront set $\WF'(A)$ of $A \in
\Psisc^{s,l}(M)$.  We will work mostly with those $A$ which are
compactly microlocalized in frequency.  Concretely, this means that
$$
A =  \text{Op}(a(z, \zeta)) + E, \quad \supp a(z, \zeta) \subset \{
|\zeta| < C \}, \ \kappa_E \in \mathcal{S}(\mathbb{R}^n \times \mathbb{R}^n),
$$
for some $C > 0$ ($\kappa_E$ is the Schwartz kernel of $E$).
The condition on $\kappa_E$ is equivalent to $E$ being a \emph{residual} operator, meaning
$$
E \in \Psisc^{-\infty, -\infty}  := \bigcap_{m, l \in \mathbb{R}
  \times \mathbb{R}} \Psisc^{m, l}.
$$
For such $A$, $\WF'(A) \subset \mathbb{S}^{n-1}_{\hat
  z} \times \mathbb{R}^n_{\zeta}$ is by definition the complement of
the set $(\hat z, \zeta)$ such that $a$ is Schwartz in $\zeta$ in the
$\hat z$ direction. 

Clearly $\WF'(A) = \varnothing \implies A \in \Psisc^{-\infty, -
  \infty}$, and thus for any $S, L \in \mathbb{R}$,
$$
\WF'(A) = \varnothing \implies A \colon H^{S, L}(\mathbb{R}^n) \lra
\mathcal{S}(\mathbb{R}^n).
$$
Also, wavefront sets have the expected algebraic property of supports,
namely, for $A \in \Psisc^{s,l}, B \in \Psisc^{s', l'}$, 
$$
\WF'(A B) \subset \WF'(A)\cap \WF'(B).
$$

Two particularly useful examples of such operators are those of the
form $Q_\pm \in \Psisc^{0,0}$ which are microlocalized near the two
components of the radial set $\mathcal{R}_\pm$.  This can be done
by defining $Q_\pm$ explicitly in a way similar to the definition of
$Q$ above \eqref{eq:3}, specifically one can take
\begin{equation}
  \label{eq:q pm}
  Q_\pm = \text{Op}(q_\pm), \quad q_\pm(x, y, \nu, \mu) =
\chi(x) \chi((|\mu|_h^2 + \nu^2)/ 2 \lambda^2) \chi((\nu \mp \lambda)/\epsilon),
\end{equation}
where again $\chi(s) =1$ for $s \le 1$ and
$\chi(s) = 0$ for $s \ge 2$.

Alternatively, one can simply break up phase space into a region where
$\nu$ is positive and its complement, excluding one radial set from each.  
We choose an operator $A_+ \in \Psi^{0,0}_{\text{cl}}(M)$ such that $A_+$ is
microlocally equal to the identity in a neighbourhood of $\Rp$, and
microlocally equal to $0$ in a neighbourhood of $\Rm$. We also let
$A_- = \Id - A_+$; thus, $A_-$ is microlocally equal to the identity
in a neighbourhood of $\Rm$, and microlocally equal to $0$ in a
neighbourhood of $\Rp$. It is convenient to choose $A_+$ such that its
principal symbol $a$ is a function only of $\nu$ in a neighbourhood of
the characteristic variety of $H$, and is monotone. Indeed, we can
take 
\begin{equation}
A_+ = \text{Op}(a), \quad a(x, \nu) = \chi(x) \chi((|\mu|_h^2 + \nu^2)/ 2 \lambda^2) \tilde \chi(\nu),
\label{eq:main commutator}\end{equation}
where $\chi$ is as in the definition of $Q_\pm$ and $\tilde \chi(\nu)
\equiv 1$ in $\nu > \lambda / 4$ and $\tilde \chi(\nu) \equiv 0$ for
$\nu < -\lambda/4$, and is monotone in between.

\begin{figure}
\centering
\labellist
\small
\pinlabel $\mathcal{R}_-$  at  134 5
\pinlabel $\mathcal{R}_+$  at  133 182
\pinlabel $\frac{\lambda}{4}$ at -7 115
\pinlabel $-\frac{\lambda}{4}$ at -11 70
\pinlabel $a\equiv1$ at 0 145
\pinlabel $a\equiv0$ at 0 40
\pinlabel $\lambda$ at 83 177
\pinlabel $-\lambda$ at 78 12
\pinlabel $\mu$ at 183  85
\pinlabel $\nu$ at 84 193
\endlabellist
\includegraphics[scale=.85]{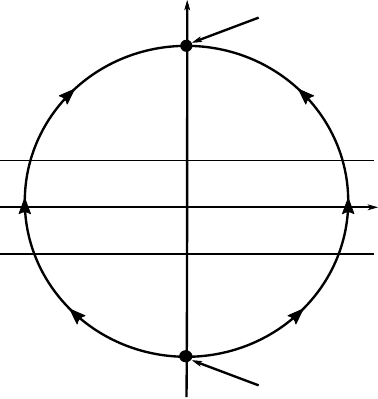}
 \caption{The bicharacteristic flow in the characteristic set \newline
   $\Sigma(P) = \{x = 0, \, \nu^2 + |\mu|_h^2 = \lambda^2\}$ with
   $P=\Delta-\lambda^2$ and $a$ the principal symbol of the microlocalizer
 $A_+$.}
 \label{fig: char set}
\end{figure}

While microlocalization gives us a concrete mechanism for analyzing
frequency-localized spatial decay, it will be useful to note that
this is also possible using anisotropic Sobolev spaces, meaning
Sobolev spaces $H^{s, \vl}$ in which the parameters $s, \vl$ are
themselves functions on phase space.  Since the operators under
consideration here are elliptic outside a compact set in frequency,
we will always take $s$ constant, but we shall employ variable spatial weights
$\vl_\pm \in S^{0,0}$ satisfying
  \begin{equation}\label{eq:decay function}
\begin{gathered}
\text{$\vl_+$ is equal to $-1/2$ outside small neighbourhoods of $\Rp$ and $\Rm$,}   \\  
\text{$\vl_+$ equals $-1/2 \mp \delta$ in smaller neighbourhoods of $\mathcal{R}_{\pm}$, }\\
 \text{$\vl_+$ is nonincreasing along the Hamilton flow of $P$ within $\Sigma(P)$,} \\
 \text{and $\vl_- = -1 - \vl_+$ has corresponding properties with $\Rm$ and $\Rp$ switched.}
\end{gathered}
\end{equation}
For definiteness, we suppose that for some sufficiently small $\delta \in [0, 0.01]$, we have $\vl_+$ equal to $-1/2$ outside the sets $\{ x^2 + |\mu|_h^2 + (\nu \mp \lambda)^2 \leq 4\delta \lambda^2 \}$,
and is equal to $-1/2 \mp \delta$ within the sets $\{ x^2 + |\mu|_h^2 + (\nu \mp \lambda)^2 \leq \delta \lambda^2 \}$. 
Thus, distributions in $H^{s, \vl_+}$ lie in $H^{s, -1/2
  - \epsilon}$ and are microlocally $H^{s,
  -1/2 + \epsilon}$ near $\mathcal{R}_-$; that is, they are below threshold regularity at $\Rp$ and above threshold at $\Rm$, with a corresponding statement true with reversed signs for $H^{s, \vl_-}$. 
Then by Theorem 3.2 of \cite{ghsz20}  we have isomorphisms
\begin{equation}\label{eq:var spaces}
  \begin{gathered}
    H - \lambda^2 \colon \X^{s, \vl_\pm} \lra \Y^{s - 2,
      \vl_\pm + 1}, \\
    \Y^{s - 2,      \vl_\pm + 1} = H^{s - 2,    \vl_\pm + 1},
    \quad \X^{s, \vl_\pm} = \{ u \in H^{s, \vl_\pm} : (H -
    \lambda^2) u \in \Y^{s - 2,      \vl_\pm + 1}\}.
  \end{gathered}
\end{equation}
The inverse maps are the incoming/outgoing resolvent $R(\lambda \pm i0)$. 
As with \eqref{eq:res map mod reg}, we notice 
the sharp difference of one in the spatial regularity, and the avoidance of the threshold value at the radial sets. 

Moreover, we can combine variable order spaces with module regularity, obtaining isomorphisms 
\begin{equation}\label{eq:var spaces modreg}
  \begin{gathered}
    H - \lambda^2 \colon \X_\pm^{s, \vl_\pm; \kappa, k} \lra \Y_\pm^{s - 2,
      \vl_\pm + 1; \kappa, k}, \\
    \Y_\pm^{s', \vl'; \kappa, k} :=  \{ u \in \Y_\pm^{s', \vl'} \mid 
    \mathcal{N}^{k}\mathcal{M}_\pm^\kappa \, u \subset \Y_\pm^{s', \vl'} \} \\
    \X_\pm^{s, \vl_\pm; \kappa, k} :=  \{ u \in  \Y_\pm^{s, \vl_\pm; \kappa, k} : (H -
    \lambda^2) u \in \Y_\pm^{s - 2, \vl_\pm + 1 ; \kappa, k} \}.
   \end{gathered}
\end{equation}
We emphasize that in \eqref{eq:var spaces modreg}, we can allow $\kappa=0$, that  is, only consider small module
regularity, in contrast to \eqref{eq:res map mod reg}. 

These variable order module regularity results do not appear explicitly in our previous work \cite{ghsz20}, but such
results follow readily from the propagation estimates in Section 3 of that paper. (The reason for
restricting the module regularity spaces to constant orders in \cite{ghsz20} is that it is more convenient to analyze multiplicative
properties of such spaces when the orders are constant.)
We shall not review variable order spaces in detail here, referring the
reader to \cite{va18}. Here we only mention one elementary property of such spaces. 
Namely, we can conclude
containment in these types of variable order spaces using
microlocalization. For example, it is straightforward to show that, given
a tempered distribution $u$, for $\epsilon > 0$ sufficiently small and $Q_+ \in \Psisc^{0,0}$ as in \eqref{eq:q pm},
  if $Q_+ u \in  H^{s, -1/2 - \delta} $ and $(\Id - Q_+) u \in
  H^{s, -1/2 + \delta}$, then $u \in H^{s, \vl_+}$.

  We note that in all these considerations, the value of $\vl_\pm$ off
  $\Sigma$ is essentially irrelevant, since off $\Sigma$ one has
  elliptic estimates for $H - \lambda^2$.


\section{Mapping properties of the linear Poisson operator}\label{sec:Pois}

The Poisson operator furnishes a distorted or generalized Fourier transform for the operator $H$. Constructions
in the Euclidean case go back a long way in scattering theory, see e.g. \cite{Ik60}, \cite{Is82}. We use the results of 
Melrose and Zworski \cite{mezw96}, who constructed the Poisson
operator $P(\lambda)$, $\lambda > 0$, for the Helmholtz operator $H =
\Delta - \lambda^2$ on asymptotically conic manifolds (scattering metrics). 
Their results extend readily to the case of a Schr\"odinger operator with potential in the
conormal space $\mathcal{A}^\gamma(M)$, for $\gamma > 1$, which is by definition the space of 
smooth functions $V$ on $M^\circ$ satisfying estimates near $\partial M$ of the form 
\begin{equation}
\Big| (x D_x)^j D_y^\alpha V(x, y)  \Big| \leq C x^{\gamma}, \quad j \geq 0, \ \alpha \in \NN^{n-1}. 
\label{Vconormal}\end{equation}
These estimates on $V$ are equivalent to \eqref{Vests} in the case of Euclidean space. 

The Poisson operator has the property that it maps a smooth function $\phi \in C^\infty(\partial M)$ to a function $u_0$ satisfying $Hu_0 = 0$, with the asymptotics
\begin{equation}
u_0 = u_- + u_+, \quad 
u_\pm = r^{-(n-1)/2} e^{\pm i\lambda r} f_\pm, \quad f_\pm \in C(M), \quad f_- \big|_{\partial M} = \phi. 
\label{usmooth}\end{equation}
In fact the $f_\pm$ can be taken to lie in  $C^\infty(M) + \mathcal{A}^{\gamma - 1}(M)$ (which are continuous up to the boundary, as $\gamma -1 > 0$); moreover, if $V$ is equal to $r^{-2}$ times a smooth function on $M$, then the $f_\pm$ can be taken in $C^\infty(M)$. 
We shall also define $P(-\lambda)$, again for $\lambda > 0$, to be the operator mapping $\phi \in C^\infty(\partial M)$ to a function $u_0$ satisfying $Hu_0 = 0$, with the asymptotics
\begin{equation*}
u_0 = u_- + u_+, \quad 
u_\pm = r^{-(n-1)/2} e^{\pm i\lambda r} f_\pm, \quad f_\pm \in C^\infty(M) + \mathcal{A}^{\gamma - 1}(M), \quad f_+ \big|_{\partial M} = \phi. 
\end{equation*}
Recall that $C^\infty$ functions on $M$, as opposed to $C^\infty$ functions on $M^\circ$, are (by definition) smooth functions of $x = 1/r$ and $y$ near $\partial M$.

The adjoint operator $P(\lambda)^*$ will also play a role in this article. To describe it, let $v$ be a Schwartz function\footnote{By a Schwartz function on $M$ we mean a smooth function that vanishes, with all its derivatives, at the boundary. When $M$ is the radial compactification of $\RR^n$ this corresponds to the usual meaning of Schwartz function.} on $M$, and let $u_\pm =  \pm R(\lambda \pm i0) v$. Then each $u_\pm$ has a similar expansion as in \eqref{usmooth}, but with only the outgoing ($+$)/incoming($-$) oscillation \cite[Proposition 12]{me94}:
\begin{equation}
u_\pm = r^{-(n-1)/2} e^{\pm i\lambda r} f_\pm, \quad f_\pm \in C^\infty(M) + \mathcal{A}^{\gamma - 1}(M).
\end{equation}
We claim that $P(\lambda)^* v = 2i\lambda f_- |_{\partial M}$. To see that this is true, we use a pairing identity that follows from Green's formula. Suppose that $u_1$ and $u_2$ are two functions of the form 
$$
u_i = r^{-(n-1)/2} \Big( e^{- i\lambda r} f_{i,-} + e^{+i\lambda r} f_{i,+} \Big), \quad  f_{i, \pm} \in C^\infty(M) + \mathcal{A}^{\gamma - 1}(M). 
$$
Suppose further that $(H - \lambda^2) u_i = O(r^{-(n+1)/2 - \epsilon})$ for some $\epsilon > 0$. Then the following identity holds \cite[Equation (13.1)]{me94} (see also \cite[Section 5]{HV99}):
\begin{equation*}
\int_M \Big(u_1 \overline{(H - \lambda^2) u_2} - ((H - \lambda^2) u_1) \overline{u_2} \Big) d\vol_g = 2i\lambda \int_{\partial M} \Big( f_{1,+} \overline{f_{2,+}} -  f_{1,-}\overline{f_{2,-}} \Big) d\vol_h ,
\end{equation*}
where we take the restrictions to $\partial M$ in the integral over the boundary.
To prove this one notes that the LHS is absolutely integrable, so it can be obtained as the limit, as $R \to \infty$, of the integral restricted to $\{ r \leq R \}$. Then one applies Green's
formula and uses the asymptotic form of the functions $u_i$ to see that the limit as $R \to \infty$ is the RHS.

We apply this with $u_1 = P(\lambda) a$, $a \in C^\infty(\partial M)$ and $u_2 = u_-$ above. Then $(H - \lambda^2)u_1 = 0$, and $f_{2,+} = 0$, while $f_{1,-} = a$. We obtain 
\begin{equation}
 \int_M (P(\lambda) a)  \overline{v} = 2i\lambda \int_{\partial M}  a  \overline{f_-}.
\end{equation}
This immediately yields 
\begin{equation} \label{poisson adjoint plus} 
P(\lambda)^* v = 2i\lambda f_- |_{\partial M}. 
\end{equation}
A similar argument with $u_+$ shows
\begin{equation} \label{poisson adjoint minus}
P(-\lambda)^* v = 2i\lambda f_+ |_{\partial M}.
\end{equation}
Moreover, applying $P(\pm \lambda)$ to $f_\mp$, we obtain $u_+ + u_-$, since this is the unique eigenfunction with incoming/outgoing data equal to $f_\mp$. It follows that we have 
\begin{equation}
P(\pm \lambda) P(\pm \lambda)^* = 2\lambda i  \Big( R(\lambda + i0) - R(\lambda - i0) \Big),
\label{sm}\end{equation}
acting on Schwartz functions. Notice that the quantity in \eqref{sm} is equal to \
the spectral measure, up to a factor of $d\lambda/4\pi$. Note also the simple consequence of \eqref{poisson adjoint plus} and \eqref{poisson adjoint minus}:
\begin{equation}
\text{$P(\pm \lambda)^*$ maps Schwartz functions on $M$ to smooth functions on $\partial M$. }
\label{eq:Padsmooth}\end{equation}

Our first purpose in this section to analyze the range of $P(\pm \lambda)$ on the $L^2$-based Sobolev space $H^m(\partial M)$. 
We begin with a basic mapping property for the Poisson operator on $L^2(\partial M)$.  For
any $\delta > 0$, identity \eqref{sm} implies that we have a continuous mapping
\begin{equation}
P(\pm \lambda) \colon L^2(\partial M) \lra H^{s, -1/2 -
  \delta}(M)\label{eq:poisson con't}
\end{equation}
Indeed, first note that both $R(\lambda \pm i0)$ map $H^{-1, 1/2  + \delta}(M)$
to $H^{1, -1/2 - \delta}(M)$.  This result is well known but follows
in particular from \eqref{eq:var spaces} taking $s = 1$ since for either choice of
$\pm$ and any $s$,
$$
H^{s-2, 1/2  + \delta} \subset \Y_\pm^{s - 2,
      \vl_\pm + 1}, \qquad  \X_\pm^{s, \vl_\pm} \subset H^{s, -1/2 - \delta}.
    $$
Formula \eqref{sm} then shows that $P(\pm \lambda)$ extends from $C^\infty(\partial M)$ to a bounded      
map from $L^2(\partial M)$ to $H^{1, -1/2 - \delta}(M)$, as this holds if and only if $P(\pm \lambda)
P(\pm \lambda)^*$ maps $H^{-1, 1/2  + \delta}(M)$ to $H^{1, -1/2 -
  \delta}(M)$ (since $H^{-1, 1/2  + \delta}(M)$ is the dual of  $H^{1,
  -1/2 - \delta}(M)$), whence \eqref{eq:poisson con't} holds for $s = 1$. Finally, since the image of
$P(\lambda)$ is contained in solutions to $(H - \lambda^2) u = 0$, the
differential index $1$ can be replaced by any $s$ using elliptic regularity.

Now we note that \eqref{eq:poisson con't} can be improved easily using
the microlocalization discussed above.  If we define
\begin{equation}
\vl_{\min} = \min(\vl_+, \vl_-), \quad \vl_{\max} = \max(\vl_+,
\vl_-),\label{eq:l min l max}
\end{equation}
then these are smooth functions on phase space for $\delta' > 0$
sufficiently small, and (again for arbitrary $s$),
$$
(H^{s, \vl_{\min}})^* = H^{-s, -\vl_{\min}} = H^{-s, \vl_{\max} + 1}
$$
by the final property in \eqref{eq:decay function}.  Repeating our
argument to deduce mapping properties for $P(\lambda)$ from
\eqref{sm}, we use
$$
H^{s-2, \vl_{\max} + 1} \subset \Y_\pm^{s - 2,
      \vl_\pm + 1}, \qquad \X_\pm^{s, \vl_\pm} \subset H^{s, \vl_{\min}}
    $$
to conclude that for any $s \in \mathbb{R}$,
\begin{equation}
  \begin{gathered}
    P(\pm \lambda) \colon L^2(\partial M) \lra H^{s, \vl_{\min}}(M), \\
    P(\pm \lambda)^* \colon H^{s, \vl_{\max} + 1}(M) \lra L^2(\partial M),
  \end{gathered}
\label{eq:poisson con't better}
\end{equation}
the latter following from adjunction of the former.  This improves \eqref{eq:poisson con't} since $\vl_{\text{min}} = -1/2 > -1/2
- \delta$ away from the radial sets.

Again we can phrase this in terms of microlocalization, giving improved decay
for $P(\pm \lambda) f$, $f \in L^2(\partial M)$ away from the radial
sets.  That is to say, if $Q \in \Psisc^{0,0}$ has $\WF'(Q) \cap
\mathcal{R}_+ \cup \mathcal{R}_- = \varnothing$, then
\begin{equation}
Q P(\pm \lambda) \colon L^2(\partial M) \lra H^{s,
  -1/2}(M),\label{eq:first improvement}
\end{equation}
as follows immediately from \eqref{eq:poisson con't better}, provided $Q$ is microsupported where $\vl_\pm = -1/2$, so that $Q \colon H^{s, \vl_{\min}} \lra H^{s, -1/2}$.
Proposition~\ref{prop:BP}, a crucial step in proving our main theorem, improves on this; it
asserts that increased regularity of the boundary distribution $f$
corresponds to increased decay of $P(\pm \lambda) f$ away from the radial sets.

The Poisson operator extends to a map on distributions, or equivalently on negative order Sobolev spaces $H^k(\partial M)$ for all $k \le 0$. The representation of the Poisson kernel as a distribution associated to a intersecting pair of Legendre submanifolds with conic points in \cite{mezw96} shows that there is a mapping property of the form 
\begin{equation}
P(\pm \lambda) : H^k(\partial M) \to H^{s, k + c}(M) \text{ for all } k \le 0
\label{eq:Pondist}\end{equation}
for  some $c \in\RR$;  this is obtained by passing derivatives from the distribution onto the Poisson kernel, thus reducing to the case $k \geq 0$. The next proposition, plus positive commutator estimates as in \cite[Section 3]{ghsz20}, show that in fact $c = -1/2$ is sharp for \eqref{eq:Pondist}, although we do not need this fact in the present article. 

\begin{proposition}\label{prop:BP}
  Let $Q \in \Psi_{\emph{sc}}^{0, 0}$ satisfy $\WF'(Q) \cap (\mathcal{R}_+
  \cup \mathcal{R}_-) = \varnothing$.  Then for all $s \in
  \mathbb{R}$, $k \in \mathbb{R}$,
  \begin{equation}
    \label{eq:decay poisson non-radial}
    Q P(\pm \lambda) \colon H^{k}(\partial M) \lra H^{s,  k - 1/2}(M).
  \end{equation}
\end{proposition}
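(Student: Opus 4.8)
The plan is to bootstrap from the case $m=0$, which is precisely \eqref{eq:first improvement}, by means of an \emph{intertwining identity} that trades a power of the spatial weight $\langle z \rangle$ on $M$ against a tangential derivative on $\partial M$, modulo operators that are harmless because $\WF'(Q)$ avoids the radial sets. Call $Q'$ \emph{admissible} if $Q' \in \Psisc^{-\infty,0}$ and $\WF'(Q') \cap (\mathcal{R}_+ \cup \mathcal{R}_-) = \varnothing$; for such $Q'$, \eqref{eq:first improvement} gives $Q' P(\lambda) \colon L^2(\partial M) \to H^{s,-1/2}(M)$ for every $s$. We also need the mild extension that a \emph{Poisson-type operator} $\mathcal{P}_a$ --- Schwartz kernel equal to a symbol $a(z,\omega)$ of scattering order $(0,0)$ in $z$, supported away from the ``radial directions'' $\{ \hat z = \pm \omega \}$, times the Poisson kernel $K_{P(\lambda)}$ --- also maps $L^2(\partial M) \to H^{s,-1/2}(M)$: such a $\mathcal{P}_a$ agrees, up to an operator of lower order (hence with better decay), with $Q_a P(\lambda)$, where $Q_a$ is admissible with principal symbol agreeing with $a(z, -\zeta/\lambda)$ near $|\zeta| = \lambda$ (this choice is possible since the scattering wavefront set of $K_{P(\lambda)}(\cdot,\omega)$ lies over the covector $-\lambda \omega$, which meets $\mathcal{R}_\pm$ exactly at $\hat z = \pm \omega$, and there $a$ vanishes).

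The heart of the matter is the following claim: for every admissible $Q$ and every $N \in \NN$ there are Poisson-type operators $\mathcal{P}_{a_j}$ with symbols supported off the radial directions, differential operators $L_j$ on $\partial M$ of order $\le N$, and a residual operator $R_N$, such that
\begin{equation}\label{eq:intertwine-plan}
\langle z \rangle^N Q P(\lambda) = \sum_j \mathcal{P}_{a_j} L_j + R_N , \qquad Q P(\lambda) L = \sum_j \langle z \rangle^N \mathcal{P}_{b_j} + R_N' ,
\end{equation}
the second identity holding for every differential operator $L$ on $\partial M$ of order $N$, with analogous $\mathcal{P}_{b_j}, R_N'$. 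To prove \eqref{eq:intertwine-plan} I would invoke the Melrose--Zworski description \cite{mezw96} of $K_{P(\lambda)}$ as a Legendre distribution associated to an intersecting pair of Legendre submanifolds with conic points, the conic points lying exactly over $\mathcal{R}_\pm$. Because $\WF'(Q)$ misses $\mathcal{R}_\pm$, the kernel of $Q P(\lambda)$ is a nondegenerate (conormal) Legendre distribution, locally an oscillatory integral in which the phase $\phi$ has $\partial_\omega \phi$ nonvanishing and $|\partial_\omega \phi| \asymp \langle z \rangle$ on the relevant part of the Legendrian --- in the flat model $\phi = -\lambda z \cdot \omega$, so $\partial_\omega \phi = -\lambda (z - (z \cdot \omega)\omega)$ has length $\lambda |z| \sin\angle(\hat z,\omega) \ge c \langle z \rangle$ off the radial directions. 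Hence $\langle z \rangle e^{i\phi} = \sum_j c_j(z,\omega)\, \partial_{\omega_j}(e^{i\phi})$ with $c_j = \langle z \rangle\, \partial_{\omega_j}\phi / (i |\partial_\omega \phi|^2)$ a symbol of order $(0,0)$ supported off the radial directions; substituting into the integral and integrating by parts in $\omega$ onto $f$ moves a vector field onto $f$ and leaves Poisson-type factors, giving the first identity in \eqref{eq:intertwine-plan} for $N=1$, and iterating gives all $N$ (error terms carry strictly fewer powers of $\langle z \rangle$ and are absorbed into the sum; contributions from the conic-point region are cut off by $Q$ and land in $R_N$). The second identity is proved the same way, now passing the transpose of $L$ through the oscillatory integral and onto $e^{i\phi}$, which multiplies by a polynomial in $\partial_\omega \phi$ of degree $N$, of size $\asymp \langle z \rangle^N$ off the radial directions.

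Granting \eqref{eq:intertwine-plan}, the proposition follows quickly. For $m = N \in \NN$: if $f \in H^N(\partial M)$ then each $L_j f \in L^2(\partial M)$, so $\mathcal{P}_{a_j}(L_j f) \in H^{s,-1/2}(M)$ and $R_N f \in \mathcal{S}(M)$, whence $Q P(\lambda) f \in \langle z \rangle^{-N} H^{s,-1/2}(M) = H^{s, N-1/2}(M)$, which is \eqref{eq:decay poisson non-radial}. For $m = -N$ with $N \in \NN$: by the structure theorem for negative-order Sobolev spaces on the closed manifold $\partial M$, write $f \in H^{-N}(\partial M)$ as $f = \sum_i L_i g_i + g_0$ with $\operatorname{ord} L_i \le N$ and $g_i, g_0 \in L^2(\partial M)$; the second identity in \eqref{eq:intertwine-plan} then shows $Q P(\lambda) L_i \colon L^2(\partial M) \to \langle z \rangle^N H^{s,-1/2}(M) = H^{s,-N-1/2}(M)$, and $Q P(\lambda) g_0 \in H^{s,-1/2}(M) \subset H^{s,-N-1/2}(M)$, so $Q P(\lambda) f \in H^{s,-N-1/2}(M)$. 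Together with $m=0$ this proves \eqref{eq:decay poisson non-radial} for all integers $m$, and the general real case follows by complex interpolation, since $\{ H^m(\partial M) \}_m$ and $\{ H^{s,m-1/2}(M) \}_m = \{ \langle z \rangle^{1/2-m} H^s(M) \}_m$ are interpolation scales and $Q P(\lambda)$ is a fixed operator.

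The main obstacle is \eqref{eq:intertwine-plan}: turning the flat-model computation into a clean statement on a general asymptotically conic manifold with a conormal short-range potential requires working with the Melrose--Zworski Legendre parametrix for $P(\lambda)$ and checking that the hypothesis $\WF'(Q) \cap (\mathcal{R}_+ \cup \mathcal{R}_-) = \varnothing$ is exactly what makes the fiber phase gradient $\partial_\omega \phi$ elliptic of order $\langle z \rangle$ on the relevant part of the Legendrian, so that each integration by parts both converges and produces one honest power of the spatial weight. Everything else is bookkeeping in the scattering calculus together with standard functional analysis.
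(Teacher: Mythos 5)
Your route is genuinely different from the paper's. The paper first uses elliptic estimates off the characteristic set and propagation of singularities along the bicharacteristic flow to reduce to $Q$ microsupported in a punctured neighbourhood of $\mathcal{R}_-$ over a small coordinate patch, where the Melrose--Zworski kernel is exactly $e^{-i\lambda\cos d_h(y,y')/x}\,\tilde a$ with $\tilde a$ supported in a deleted neighbourhood of the diagonal; it then conjugates by the Fourier transform, views $\mathcal{F}\circ QP(\lambda)$ as a Fourier integral operator with $z$ as the phase variable, and applies H\"ormander's Theorem 25.3.8, which yields $H^m(\partial M)\to H^{m-1/2}$ for \emph{all} real $m$ in one stroke (the corank-one condition on the canonical relation is what produces the exponent $-1/2$). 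You instead take the $m=0$ case \eqref{eq:first improvement} (which the paper derives from \eqref{sm} and \eqref{eq:poisson con't better}, so it is legitimately available) and bootstrap by integration by parts in the boundary variable, using that $\partial_\omega\phi$ is elliptic of order $\langle z\rangle$ off the radial directions; this is the same geometric fact the paper encodes in the non-tangency/corank conditions on the canonical relation, and your argument is essentially the paper's own flat-case remark (commuting rotation vector fields through $P_0(\lambda)$) recast at the level of the phase function. What the paper's method buys is all real $m$ at once, with no interpolation and no separate treatment of negative orders; what yours buys, if completed, is an argument that avoids the FIO mapping theorem entirely and makes the ``trade one power of decay for one boundary derivative'' mechanism explicit.

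Two steps need more than bookkeeping. First, the $L^2(\partial M)\to H^{s,-1/2}(M)$ boundedness of your modified Poisson-type operators $\mathcal{P}_{a}$: the identity $\mathcal{P}_a=Q_aP(\lambda)+(\text{lower order})$ is the symbol calculus for Legendre distributions under composition with scattering pseudodifferential operators, and the remainder is again a Poisson-type operator with one extra power of decay, so you must iterate (finitely many times suffices, since a crude Schur/Cauchy--Schwarz bound handles the tail) and also check that $z$-derivatives of the kernel have the same structure to get arbitrary differential order $s$. This is true but is a lemma from the Melrose--Zworski/Hassell--Vasy machinery, not a ``mild extension.'' Second, and more seriously, on a general asymptotically conic manifold the single phase $-\lambda\cos d_h(y,y')/x$ parametrizes the flowout Legendrian only within the injectivity radius of $(\partial M,h)$; beyond the cut locus one needs local parametrizations with auxiliary phase variables, and your integration by parts must then be carried out on the critical set of each local parametrization. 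You correctly identify this as the main obstacle but do not resolve it. The cleanest fix is exactly the paper's first reduction: prove your intertwining identity only for $Q$ microsupported near $\mathcal{R}_-$ over a small ball (where the explicit phase is smooth), and then handle all other non-radial points by the microlocal elliptic estimate and the propagation estimate for $H-\lambda^2$, precisely as in the last paragraph of the paper's proof. With those two repairs your argument goes through for integer $m$, and the interpolation step to general real $m$ is unproblematic since $\{\langle z\rangle^{1/2-m}H^s(M)\}_m$ is a complex interpolation scale.
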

Before we prove the proposition we note the straightforward corollary.
\begin{corollary}\label{thm:cor}
  Let $A_+$ be chosen as in \eqref{eq:main commutator}, and $s$ be any real number.  Then for $f
  \in H^k(\partial M)$,
\begin{equation}
[H, A_\pm] P( \lambda) f, \, [H, A_\pm] P( -\lambda) f  \in H^{s, k + 1/2}(M)
\label{comm}\end{equation}
and is microlocally trivial in a neighborhood of $\mathcal{R}_+ \cup
\mathcal{R}_-$. Consequently we also have 
\begin{equation}
[H, A_\pm] P( \lambda) f, \, [H, A_\pm] P( -\lambda) f  \in \Y_\pm^{s, \vl_{\max} + 1; k, 0}(M).
\label{commY}\end{equation}
\end{corollary}
The statement about microlocal triviality here means that there is a
neighborhood $U$ of $\mathcal{R}_- \cup \mathcal{R}_+$ so that for any $Q
\in \Psisc^{0,0}$ with $\WF' (Q) \subset U$, $Q [H, A_\pm] P(\pm \lambda) f
\in H^{S, L}$ for any $S, L$, i.e.\ is rapidly decaying.

\begin{proof}[Proof of Corollary]The first statement \eqref{comm} follows immediately from Proposition~\ref{prop:BP} and the fact that $[H, A_\pm]$
are scattering pseudodifferential operators of order $(1, -1)$. To obtain \eqref{commY}, we first replace $s$ by $s+k$ in \eqref{comm}, and then note that the $k$th power of the module $\mathcal{M}_\pm$ maps the functions $[H, A_\pm] P( \pm \lambda) f$ to $H^{s, 1/2}(M)$ simply using the fact that module elements have order $(1,1)$, whence
we find that these functions are in $\Y_\pm^{s, 1/2; k, 0}(M)$. Finally, the microlocal triviality near the radial sets allows us
to vary the spatial order $1/2$ arbitrarily near the radial sets, so we can replace the order $1/2$ with $\vl_{\max}+1$. 
\end{proof}

We remark that when $H = H_0 := \Delta_{\mathbb{R}^n} $ is the flat
structure and $k \in \NN$, Proposition \ref{prop:BP} follows without using the Poisson kernel used in the more general proof below.
Let $P_0(\lambda)$ denote the Poisson operator in the flat case.  Assuming without
  loss of generality that $\supp Q \subset \{ r \ge 1 \}$, one can
  make use of the generators of rotation on $\mathbb{S}^{n-1},$ which
  form a family $\Rot := \{ V \}$ of vector fields commuting with
  $\Delta_{\mathbb{S}^{n-1}}$ and generating the cotangent space at
  every point, in particular $f \in H^k(\mathbb{S}^{n-1}) \iff \Rot^j f \subset
  L^2(\mathbb{S}^{n-1})$ for all $j \le k$.  Let $V$ denote an arbitrary such vector field.   Using $[H_0,
  V] = 0$ in $r > 0$, we claim that for $f \in H^k$, 
  \begin{equation}
    \label{eq:Poisson commute}
     P_0(\lambda) V f = V P_0(\lambda) f.
  \end{equation}
 This follows from the explicit formula for the Poisson
  operator and integration by parts, but in using the results
  presented here one can argue as follows.  For $f \in
  C^\infty(\mathbb{S}^{n-1})$, the expansion for $P_0(\lambda) f$ gives
  that $P_0(\lambda) V f - V
  P_0(\lambda) f$ lies in $\mathcal{X}^{s,\vl_+}$, but we have $(H_0 - \lambda^2) ( P_0(\lambda) V f - V
  P_0(\lambda) f) = 0$, so since $H_0 - \lambda^2$ is injective on this
  domain, \eqref{eq:Poisson commute} follows.  The condition on $\WF'(Q)$, means that $p := (\hat z, \nu, \mu) \in
  \WF'(Q)$ then $\mu \neq 0$, and thus we can find $V$ with
  $\sigma_{1,1}(V)(p) \neq 0$.  Since $QV \in \Psi^{-\infty, 1}$ is
  elliptic at $p$, if $f \in H^k$, since $Q P_0(\lambda) V f \in H^{s, -1/2}$ we have by
  scattering elliptic regularity that $Q P_0(\lambda) f \in H^{s, - 1/2 +
    k}$.  We will use the structure of the Schwartz kernel
  of the Poisson operator to deduce this same result for more general
  Hamiltonians.

  \begin{proof}[Proof of Proposition \ref{prop:BP}]
We prove \eqref{eq:decay poisson non-radial} for $+\lambda$ only.  This will follow by using the well understood structure of the integral
kernel of $P(\lambda)$, due in this generality to Melrose and Zworski
\cite{mezw96}. The proof of \eqref{eq:decay poisson non-radial} for $-\lambda$ follows analogously. 

 As we show below, using propagation of singularities,
it will suffice to restrict the microsupport of $Q$ to punctured
neighborhoods of $\mathcal{R}_-$ over small balls on the boundary, as follows.  First
pick a small coordinate patch $V$ on $\partial M$ with coordinates
$y$, and then a small neighborhood $V'$ of $\mathcal{R}_- \cap
{}^{\sca} T^*_V M$; our $Q$ will be supported in the punctured
neighborhood $U = V' \setminus \mathcal{R}_-$.  Concretely, this can
be parametrized as $\{ (x , y, \nu, \mu ) : x, \nu^2 + |\mu|_h^2 - \lambda^2 < \epsilon,
0 < |\nu + \lambda| < \epsilon \}$ with $y$ in the coordinate patch.

For such $Q$, the operator $QP(\lambda)$ takes the form \cite{mezw96}
\begin{equation}
e^{-i\lambda \cos d_h(y,y')/x} \tilde a(x, y, y') + e(x, y, y')
\label{Poisson-explicit}\end{equation}
where $d_h$ is the distance function on $(\partial M, h)$, $\tilde a$ is
smooth and is supported where $y$ is in a deleted
neighborhood of $y'$ (with $y'$ varying in the same chosen coordinate
patch as $y$) and we integrate with respect to the $h$-Riemannian measure on $\partial M$. Here $e$ is a smooth function which is rapidly
decaying as $x \to 0$ which we subsequently ignore.

Since $\mathcal{F} (H^{s,k}(\mathbb{R}^n)) = H^{k, s}(\mathbb{R}^n)$,
were we working in Euclidean space, it would now suffice to prove that
$\mathcal{F} \circ Q P(\lambda) \colon H^k \lra H^{k-1/2, s}$.  To
use this approach, we identify the open set $(0, \epsilon)_x \times V \subset
M^\circ$ with a subset of Euclidean space by first choosing any local
diffeomorphism $\psi: V \to \US^{n-1}$, and then writing $\hat{z} = \psi(y)$, $r = 1/x$, $z = r\hat z(y) \in \R^n$.  Then
$\mathcal{F} \circ Q P(\lambda)$ has Schwartz kernel
$$
\tilde K(\zeta, y') := \int e^{-iz \cdot \zeta} e^{-i\lambda |z|  \cos d_h(\hat z,\psi(y'))} a(z, y') \, dz.
$$
where $a(z, y) = \tilde a(x, y, y')$ is a classical symbol in the $z$-variable of
order $0$, smooth in $y$. We claim that for $\chi \in
C^\infty(\mathbb{R}^n)$ with $\chi(\zeta) = 1$ on $|\zeta| < 2
\lambda$ and \\ $\supp \chi \subset \{ |\zeta| < 4\lambda \}$, then
$$
K(\zeta, y') := \int e^{-iz \cdot \zeta} e^{-i\lambda |z|  \cos
  d_h(\hat z,\psi(y'))} a(z, y') \chi(\zeta) \, dz, \quad a \in S^0
$$
satisfies
\begin{equation}
\tilde K(\bullet, y') - K(\bullet, y')  \in
\mathcal{S}(\mathbb{R}^n_\zeta),\label{eq:difference}
\end{equation}
smoothly in $y'$, i.e. it is rapidly
decreasing as $\zeta \to \infty$, as follows from non-stationary
phase.

We now view $K$ as a Fourier integral operator mapping from the sphere
to the dual $\RR^n$, parametrized by $\zeta$, with $z$ playing the role of a
homogeneous phase variable. 
We use the mapping properties of FIOs found in
Chapter 25 of \cite{Hvol4}, in particular Theorem 25.3.8.  Recall
throughout that $a$ is supported where $y$ is close to $y'$ but not
equal to it.  Here $K$ is an FIO of order $0 -
((n-1) + n - 2n)/4 = 1/4$ 
with phase function $\phi(\zeta, z, y')  =-iz \cdot \zeta -i\lambda |z|  \cos
  d_h(\hat z,y')$ where $z$
is the auxiliary variable.  We compute that $\phi$ defines the
canonical relation
$$
C := \{ (\zeta, z; y', \mu') \mid \zeta = \lambda \hat z \cos d_h(\hat z,y')
+ \lambda \theta_2 \sin d_h(\hat z,y'),\quad  \mu' = \lambda |z| \theta_1 \sin d_h(\hat z,y') \}
$$
where $\theta_1$ and $\theta_2$ are the initial, resp. final, fibre
coordinates of the unit length geodesic between $y'$ and $\hat z$ with
respect to the Riemannian metric $h$ on $\partial M$,
interpreted as being in $T^*_{y'} \partial M$ in the first case and
$T^*_{y(\hat z)} \partial M$ in the second.  The dimension of the
canonical relation is $2n - 1$. It is parametrized by $z, y'$
since for $y, y'$ sufficiently close (recall that $y$ parametrizes
$\hat z$), the $\theta_i$ are determined by $y, y'$ and therefore by
$\hat z$ and $y$ and the other variables $\mu', \zeta$ are given in
terms of these.  On the other hand the projection of $C$ to the
`right' factor $T_{y'}^* \partial M$ is a surjective submersion
since $\partial \mu' / \partial \hat z$ is full rank for $y, y'$
sufficiently close but not equal.  Thus the pull back of the
symplective form on $T^* \partial M_{y'}$ to $C$ is rank $2(n - 1)$ or
corank $1$.  Finally, at no point are either of the  radial vector
fields $\mu' D_{\mu '}$
in $T^*\partial M \setminus \{ 0 \}$ nor  $z D_z$ in $T^* \mathbb{R}^n_\zeta
\setminus \{ 0 \}$ tangential to $C$.

Therefore \cite[Theorem 25.3.8]{Hvol4} together with the standard
argument (see e.g.\ \cite[Corollary 25.3.2]{Hvol4}) in which one
pre/post composes an FIO by invertible elliptic pseudodifferential
operators, give that $K$ maps $H^k(\partial M)$ to
$H^{k - 1/2}(\RR^n_\zeta)$ continuously.  (Note that the theorem concludes a
mapping to $H^{k - 1/2}_{\text{loc}}(\RR^n_\zeta)$ but $K$ is compactly
supported in $\zeta$.)  Compact support of $\tilde K$ in $\zeta$ and
\eqref{eq:difference} then give that $\langle \zeta \rangle^s \tilde K$
maps $H^k(\partial M)$ to $H^{k - 1/2, s}(\RR^n_\zeta)$ for any $s$,
whence composing with the inverse Fourier transform gives
\eqref{eq:first improvement} for $Q$ microlocalized near the radial
sets and locally over the boundary.

To treat general $Q$ with $\WF'(Q) \cap (\mathcal{R}_+ \cup
\mathcal{R}_-) = \varnothing$, we recall that the assertion of continuity in
\eqref{eq:decay poisson non-radial} can be microlocalized in the sense
that it will follow if, for all $q \in \WF'(Q)$ there is $\tilde Q \in \Psisc^{0,0}$
such that $q \in \Ell_{0,0}(\tilde Q)$ and for some $C > 0$
$$
\| \tilde Q P(\lambda) f \|_{H^{s, k -1/2}(M)} \le C  \| f
  \|_{H^k(\partial M)}.
  $$
  for all $f \in H^m(\partial M)$.  
For $q \in \Ell_{2, 0}(H - \lambda^2)$ this follows from
the microlocal elliptic estimate (see e.g.\ \cite[Prop.\ 3.3]{ghsz20}
\begin{equation*}
  \begin{gathered}
    \| \tilde Q P(\lambda) f \|_{H^{s, k -1/2}(M)} \lesssim \| (H -
    \lambda^2) P(\lambda) f \|_{H^{s -2, k- 1/2}(M)} + \| P(\lambda)
    f \|_{H^{- S, -L}}) \\
    = \| P(\lambda) f \|_{H^{- S, -L}(M)} \lesssim \| f \|_{H^k(\partial M)}
  \end{gathered}
  \end{equation*}
where $S, L$ are chosen sufficiently large and the final bound comes from \eqref{eq:Pondist}.  (Here $\lesssim$ means
there exists a $C > 0$ such that the LHS is bounded by $C$ times the RHS.)
For $q \in
\Sigma_{2,0}(H - \lambda^2) \setminus (\mathcal{R}_+ \cup \mathcal{R}_-)$, then there is a
bicharacteristic ray $\gamma$ in $\Sigma$ such that $\gamma(0) = q$
and $\lim_{\sigma \to - \infty} \gamma(\sigma) \in \mathcal{R}_-$.  The first
part of our proof implies the existence of a $\sigma_0 << 0 $ and $Q
\in \Psisc^{0,0}$ such that $\gamma(\sigma_0) \in \Ell_{0,0}(Q)$ and $Q$
satisfies \eqref{eq:decay poisson non-radial}.  On the other hand, by the
standard propagation of singularities estimate (see e.g. \cite[Prop.\
3.4]{ghsz20}), there exists $\tilde Q \in \Psisc^{0,0}$ with $q \in \Ell_{0,0}(\tilde Q)$ such that
\begin{equation*}
  \begin{gathered}
    \| \tilde Q P(\lambda) f \|_{H^{s, k -1/2}(M)} \phantom{  \| \tilde Q P(\lambda) f \|_{H^{s, k -1/2}(M)}  
    \| \tilde Q P(\lambda) f \|_{H^{s, k -1/2}(M)}  \| \tilde Q P(\lambda) f \| }
    \\ \lesssim \| Q
    P(\lambda) f \|_{H^{s, k -1/2}(M)}  + \| (H -
    \lambda^2) P(\lambda) f \|_{H^{s -2, k + 1/2}(M)} + \| P(\lambda)
    f \|_{H^{- S, -L}}) \\
    = \| Q
    P(\lambda) f \|_{H^{s, k -1/2}(M)} + \| P(\lambda) f \|_{H^{- S, -L}} \\ \lesssim \| Q
    P(\lambda) f \|_{H^{s, k -1/2}(M)} + \| f \|_{H^k}
  \end{gathered}
\end{equation*}
where we choose $S, L$ sufficiently large and use \eqref{eq:Pondist} to estimate $P(\lambda) f$ by $f$. 
Then,  using $\| Q P(\lambda) f \|_{H^{s, k -1/2}(M)} \lesssim  \| f \|_{H^{k}}$ gives the result for $q$ on the characteristic set away from the radial
sets, completing the proof.    \end{proof}

The next proposition is known to experts, see e.g. \cite{Va98} and \cite[Section 3]{HMV04} in a slightly different context. 
It is implicit in older works, e.g. \cite{IK84}; no doubt many other references could be given. 
Notice that in \eqref{u-decomp},
$R(\lambda + i0) - R(\lambda - i0)$ is, up to a constant, the `spectral projection' $dE(\lambda^2)$ for
the operator $H$. This operator is not an actual projection, however; in fact, it is not bounded on any
natural Hilbert space. The proposition shows that a modification of this operator leads to a genuine projection,
that acts as the identity on the generalized eigenfunctions. 

\begin{proposition}\label{prop:L2} 
Let $u = P(\lambda)f$, for $f \in H^k(\partial M)$, $k = 0, 1, 2, \dots$, and let $s \in \mathbb{R}$. Then  we can express
\begin{equation}\begin{gathered}\label{u-decomp}
u = u_+ + u_-, \\ 
u_\pm = A_\pm u = R(\lambda \pm i0) [H, A_\pm] u \in \X_\pm^{s+2, \sw_\pm; k, 0}(M). 
\end{gathered}\end{equation}

Conversely, let $w \in H^{s, k+1/2}(M)$ be such that $w$ is microlocally trivial  in a neighbourhood of $\Rm \cup \Rp$. 
Then, defining $u$ by
$$
u := \Big( R(\lambda + i0) - R(\lambda - i0) \Big) w,
$$
$u$ can be obtained as $u = P(\lambda)f$ for a function $f \in H^k(\mathbb{S}^{n-1})$, namely the function $f = (i/2\lambda)P(\lambda)^* w$. 
\end{proposition}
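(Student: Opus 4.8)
The plan is to prove the two directions of Proposition~\ref{prop:L2} using the pairing identity \eqref{sm} together with the already-established regularity of $[H,A_+]P(\lambda)f$ from Corollary~\ref{thm:cor} and the mapping properties of the resolvents in \eqref{eq:var spaces} and \eqref{eq:var spaces modreg}. For the forward direction, the starting point is the commutator identity $(H-\lambda^2)A_+ u = [H,A_+]u$, valid because $(H-\lambda^2)u=0$. Writing $v := [H,A_+]u$, Corollary~\ref{thm:cor} tells us $v \in H^{s,k+1/2}(M)$ and is microlocally trivial near $\mathcal{R}_+\cup\mathcal{R}_-$; in particular $v$ lies in $\mathcal{Y}_\pm^{s-2,\vl_\pm+1}$ for both signs, since away from the radial sets $\vl_\pm+1 = 1/2$ and the threshold issues at the radial sets are irrelevant where $v$ is rapidly decaying. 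Then $w_\pm := R(\lambda\pm i0)v$ both solve $(H-\lambda^2)w_\pm = v$; the difference $A_+ u - w_+$ is killed by $H-\lambda^2$ and lies in the space $\X_+^{s,\vl_+}$ on which $H-\lambda^2$ is injective by \eqref{eq:var spaces}, so $A_+u = R(\lambda+i0)v$; symmetrically $A_-u = -R(\lambda-i0)v$ (the sign because $[H,A_-] = -[H,A_+]$ and $A_- = \Id - A_+$). Adding, $u = A_+u + A_-u = (R(\lambda+i0)-R(\lambda-i0))v$, which is \eqref{u-decomp}. One must check $A_+u$ genuinely lies in $\X_+^{s,\vl_+}$ — this needs that $u_+ = A_+u_0$ lies below threshold at $\mathcal{R}_+$ and satisfies the required module/decay conditions, which follows from the Melrose--Zworski expansion \eqref{usmooth} as already used in Section~\ref{sec:Pois}.

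For the converse direction, suppose $w \in H^{0,k+1/2}(M)$ is microlocally trivial near $\mathcal{R}_-\cup\mathcal{R}_+$, and set $u := (R(\lambda+i0)-R(\lambda-i0))w$. By the same reasoning as above, $w \in \mathcal{Y}_\pm^{-2,\vl_\pm+1}$ (and in fact $w$ has the better module regularity needed because of its decay rate $k+1/2$ and microlocal triviality at the radial sets — one verifies $w \in \mathcal{Y}_\pm^{-2,\vl_\pm+1;0,k}$ using the description in \eqref{eq:var spaces modreg}), so $R(\lambda\pm i0)w$ are well-defined elements of $\X_\pm^{0,\vl_\pm;0,k}$. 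Then $(H-\lambda^2)u = w - w = 0$, so $u$ is a generalized eigenfunction. The content is to identify its incoming data. For this I would use the pairing identity from Section~\ref{sec:Pois}: for a Schwartz test function, the identity defining $P(\lambda)^*$ gives $P(\lambda)^* w = -2i\lambda f_-|_{\partial M}$ where $f_-$ is the incoming coefficient in the expansion of $-R(\lambda-i0)w$ (equivalently, $R(\lambda+i0)w$ contributes only an outgoing oscillation, so the entire incoming part of $u$ comes from $-R(\lambda-i0)w$); combined with the uniqueness of the eigenfunction with given incoming data, applying $P(\lambda)$ to $f := (i/2\lambda)P(\lambda)^*w$ recovers $u$. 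One needs to know $f \in H^k(\mathbb{S}^{n-1})$: this follows from the adjoint mapping property in \eqref{eq:poisson con't better} upgraded with module regularity — $P(\lambda)^*$ maps $H^{s,\vl_{\max}+1;0,k}$-type spaces into $H^k(\partial M)$ — which is the adjoint statement of Proposition~\ref{prop:BP}-type estimates for $P(\lambda)$, or can be extracted from the structure of $P(\lambda)^*$ as the Poisson operator's adjoint.

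The main obstacle, I expect, is the converse: specifically, proving that $f = (i/2\lambda)P(\lambda)^*w$ actually lies in $H^k(\partial M)$ and that $P(\lambda)f = u$. The regularity $f \in H^k$ requires a quantitative version of the gain ``$k$ orders of decay of $w$ away from the radial sets $\Rightarrow$ $k$ orders of boundary regularity of $P(\lambda)^*w$'', which is precisely the adjoint form of Proposition~\ref{prop:BP} and must be combined carefully with the microlocal triviality of $w$ at the radial sets (so that the radial-set contributions, where decay does not translate to boundary regularity in the same way, are simply absent). The identity $P(\lambda)f = u$ then follows from the pairing identity plus uniqueness, but one must be careful that $w$ need not be Schwartz, so the pairing identity must be justified by a density/continuity argument using the mapping properties just established rather than applied verbatim. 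Once these two points are in place, the rest is bookkeeping with the isomorphism statements \eqref{eq:var spaces} and \eqref{eq:var spaces modreg}.
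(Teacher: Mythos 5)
Your proposal is correct and follows essentially the same route as the paper: the forward direction via $(H-\lambda^2)A_\pm u=[H,A_\pm]u$ and the left-inverse/injectivity property of $R(\lambda\pm i0)$ on $\X_\pm^{s,\vl_\pm}$, and the converse via the identity \eqref{sm} together with a splitting of $w$ into a piece microsupported away from the radial sets (handled by the adjoint of Proposition~\ref{prop:BP}) and a residual Schwartz piece. The one caveat is that the membership $A_+u\in\X_+^{s,\vl_+}$ for merely $H^k$ (in particular $L^2$) boundary data should be justified by \eqref{eq:poisson con't} and Proposition~\ref{prop:BP} rather than the smooth-data expansion \eqref{usmooth}, which is exactly how the paper argues.
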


\begin{proof}
First we show \eqref{u-decomp}. Using the identity $(H - \lambda^2) u = 0$ we evaluate the right hand side.
\begin{align*}
  &\Big( R(\lambda + i0) - R(\lambda - i0) \Big) [H, A_+] u \\ &= R(\lambda
+ i0) [H - \lambda^2, A_+] u + R(\lambda- i0) [H -\lambda^2, A_-] u
  \\
  & =  R(\lambda + i0) (H - \lambda^2) A_+ u + R(\lambda - i0) (H - \lambda^2) A_- u. 
\end{align*}

We claim that $A_+ u$ is in the space $\X^{s, \sw_+}$ (for any $s$). To see this, we use \eqref{eq:poisson con't},
which shows that $P(\lambda) f$ is in $H^{s, -1/2 - \epsilon}$ globally, for arbitrary $\epsilon > 0$, and
Proposition~\ref{prop:BP}, which shows that it is in $H^{s, -1/2}$ microlocally away
from $\Rp$, together with the definition of the operator $A_+$ which microlocalizes away from
$\Rm$. Similarly, $A_- u$ is in the space $\X^{s, \sw_-}$. 
On these spaces,
$R(\lambda + i0)$, respectively $R(\lambda - i0)$ is a left inverse to
$H - \lambda^2$ (see \eqref{eq:var spaces}). It follows that the RHS is equal to $A_+ u + A_- u =
u$, proving the identity \eqref{u-decomp} with $u_\pm = A_\pm u$. Moreover, by Corollary~\ref{thm:cor}, $[H, A_\pm]u$ is in $\Y_\pm^{s, \sw_{\max} +1; k, 0} \subset \Y_\pm^{s, \sw_\pm +1; k, 0}$,  so using the resolvent mapping property \eqref{eq:var spaces modreg}, we see that $u_\pm$ is in $\X_\pm^{s+2, \sw_\pm; k, 0}(M)$ as claimed.  

We show the converse statement first for $k=0$. This  follows immediately from \eqref{sm} and \eqref{eq:poisson
  con't better}  since the assumption
on $w$ implies that $w \in H^{s, \vl_{\text{max} }+1}$, provided that $\delta$ in the definition of $\sw_\pm$ is sufficiently small, so that $w$ is microlocally trivial in the region where $\sw_{\max} \neq -1/2$. For any positive integer $k$, we
write 
$$P(\lambda)^* w = P(\lambda)^* Qw + P(\lambda)^* (\Id - Q) w,$$
where $Q$ is microlocally equal to the identity on the microsupport of $w$, and microlocally trivial near the radial sets.
Then $(\Id - Q) w$ is Schwartz, so using \eqref{eq:Padsmooth}, $P(\lambda)^* (\Id - Q) w$ is $C^\infty$. For the other term,
the adjoint of Proposition~\ref{prop:BP} shows that $P(\lambda)^* Qf$ is in $H^k(\partial M)$. 
\end{proof}

We now begin our analysis of the incoming/outgoing data of distributions
in the image of $R(\lambda \pm i 0)$.  The main improvement in the
regularity of this incoming/outgoing data comes from a combination of the
mapping properties of the Poisson operator and its adjoint, together
with the reproducing formula \eqref{u-decomp}.

\begin{proposition}\label{prop:reslimits} 
Let $s \geq 0$, and assume $\wmac$ is in $\Y_+^{s, \vl_{\max} +1; 1, k-1}$, $k \geq 2$, where $\vl_{\max}$ is defined in \eqref{eq:l min l max}. Assume that $\delta$ satisfies
\begin{equation}
0 < 2\delta < \min(1, \gamma - 1),
\label{eq:eps-cond}\end{equation}
where $\gamma$ is as in \eqref{Vests}. 
    Then $u_+ := R(\lambda + i0) \wmac$ is such that, in a collar neighbourhood of the boundary $\partial M \times (R, \infty)_r$, the limit  
$$ 
\mathcal{L}(\lambda) u_+ := \lim_{r \to \infty} r^{(n-1)/2} e^{- i\lambda r}  u_+(r, \cdot) 
$$
exists in $H^{k-2}(\partial M)$, with estimates 
\begin{equation}\begin{gathered}
\Big\| \mathcal{L}(\lambda) u_+ \Big\|_{H^{k-2}(\partial M)} \leq C \| \wmac \|_{\Y_+^{s, \vl_{\max} +1; 1, k-1}}, \\
\big\| r^{(n-1)/2} e^{- i\lambda r}  u_+(r, \cdot) - \mathcal{L}(\lambda) u_+ \big\|_{H^{k-2}(\partial M)} = O(r^{-\epsilon})
\end{gathered}\label{Lm}\end{equation}
for $0 < \epsilon < \delta $. Similarly, if $\wmac \in \Y_-^{s, \vl_{\max} +1; 1, k-1}$, $k \geq 2$, then $u_- := R(\lambda - i0)
\wmac$ is such that the limit
$$ 
\mathcal{L}(-\lambda) u_- := \lim_{r \to \infty} r^{(n-1)/2} e^{+ i\lambda r}  u_-(r, \cdot) 
$$
exists in $H^{k-2}(\partial M)$, with estimates 
\begin{equation}\begin{gathered}
\Big\| \mathcal{L}(-\lambda) u_- \Big\|_{H^{k-2}(\partial M)} \leq C \| \wmac \|_{\Y_-^{s, \vl_{\max} +1; 1, k-1}}, \\
\big\| r^{(n-1)/2} e^{+ i\lambda r}  u_-(r, \cdot) - \mathcal{L}(-\lambda) u_- \big\|_{H^{k-2}(\partial M)} = O(r^{-\epsilon})
\end{gathered}\label{Lp}\end{equation}
for $0 < \epsilon < \delta $. 
\end{proposition}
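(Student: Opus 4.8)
We prove the statement for $u_+ = R(\lambda + i0)\wmac$; the case of $u_- = R(\lambda - i0)\wmac$ follows by the symmetry $\lambda \mapsto -\lambda$, $\Rp \leftrightarrow \Rm$ (with complex conjugation), so we do not repeat it. Since $\vl_{\max} \ge \vl_+$ we have $\wmac \in \Y_+^{s, \vl_+ + 1; 1, k-1}$, so \eqref{eq:var spaces modreg} gives $u_+ \in \X_+^{s+2, \vl_+; 1, k-1}$; in particular $u_+ \in H^{s+2, \vl_+}$, $\mathcal N^{k-1}\mathcal M_+ u_+ \subset H^{s+2,\vl_+}$ (so $\mathcal N^{k} u_+ \subset H^{s+2,\vl_+}$ too, as $\mathcal N \subset \mathcal M_+$), and $(H - \lambda^2)u_+ = \wmac$. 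Pick $Q_+ \in \Psisc^{-\infty,0}$ microlocally equal to the identity near $\Rp$ and microsupported in a small neighbourhood of it, and write $u_+ = Q_+u_+ + (\Id - Q_+)u_+$. Microlocalizing the propagation estimates of \cite[Section 3]{ghsz20} for $(H - \lambda^2)u_+ = \wmac$ — elliptic estimates off $\Sigma$, the above-threshold radial-point estimate at $\Rm$, and real-principal-type propagation on $\Sigma\setminus(\Rp\cup\Rm)$ — and using the improved decay order $\vl_{\max}+1$ of $\wmac$ together with the elliptic order-$(1,1)$ element of $\mathcal M_+$ present away from $\Rp$ and the $k-1$ available angular derivatives, one finds that $(\Id - Q_+)u_+$ lies in a space of the form $H^{s', -1/2 + \kappa}$ with $k-2$ angular derivatives, for some $\kappa>0$. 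Multiplication by $r^{(n-1)/2}e^{-i\lambda r}$ sends such a space into $O(r^{-\kappa})$ in $H^{k-2}(\partial M)$ (the oscillatory factor is irrelevant for the weighted $L^2$ estimate), so the contribution of $(\Id - Q_+)u_+$ vanishes at rate $O(r^{-\kappa})$ and it remains to treat $Q_+u_+$.

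Near $\partial M$, writing $g = dr^2 + r^2 h(1/r, y, dy)$, set $v \defeq r^{(n-1)/2}e^{-i\lambda r}Q_+u_+$, a function on $\partial M\times(R,\infty)_r$. Conjugating $H - \lambda^2$ by $r^{-(n-1)/2}e^{i\lambda r}$ and using $(H - \lambda^2)Q_+u_+ = Q_+\wmac + [H, Q_+]u_+$, we obtain a transport equation
\[
\big(2\lambda D_r + D_r^2 + c_n r^{-2} + r^{-2}\Delta_{h(1/r)} + V + E\big)v = \tilde{\wmac}, \qquad c_n := \tfrac{(n-1)(n-3)}{4},
\]
where $\Delta_{h(1/r)}$ is the Laplace--Beltrami operator on $\partial M$ in the metric $h(1/r, \cdot)$, $E$ is a differential operator in $(r,y)$ of order $\le 2$ whose coefficients are $O(r^{-2})$ near $\partial M$ (coming from $h(1/r)-h(0)$ and from $\partial_r\log\det h(1/r,\cdot)$; $E\equiv 0$ near $\partial M$ in the asymptotically Euclidean case), and $\tilde{\wmac} = r^{(n-1)/2}e^{-i\lambda r}\big(Q_+\wmac + [H, Q_+]u_+\big)$. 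The term $[H, Q_+]u_+$, with $[H, Q_+]\in\Psisc^{-\infty,-1}$ microsupported away from $\Rp$ and meeting $\Sigma$ only where the estimates of the first paragraph apply, is negligible; and because $\vl_{\max}+1$ is \emph{above} threshold at $\Rp$ — this is exactly why the hypothesis is stated with $\vl_{\max}$ rather than $\vl_+$ — the module regularity of $\wmac$ yields $\|\tilde{\wmac}(r,\cdot)\|_{H^{k-1}(\partial M)} = O(r^{-1-\epsilon_0})$ for some $\epsilon_0>0$.

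The crucial point is to read the equation as the \emph{first-order} equation $D_r\big((D_r + 2\lambda)v\big) = g_r$, with $g_r := \tilde{\wmac} - \big(c_n r^{-2} + r^{-2}\Delta_{h(1/r)} + V + E\big)v$, so that no second radial derivative of $v$ need ever be integrated. From $u_+ \in H^{s+2,\vl_+;1,k-1}$ with $\vl_+ = -1/2 - \delta$ near $\Rp$ one has the crude bounds $\|v(r,\cdot)\|_{H^k(\partial M)} = o(r^\delta)$ and $\|D_r v(r,\cdot)\|_{H^{k-1}(\partial M)} = o(r^{-1+\delta})$. Substituting into $g_r$: the $c_n r^{-2}$, $r^{-2}\Delta_{h(1/r)}$ and $E$ terms each cost at most two angular derivatives and are $O(r^{-2+\delta})$ in $H^{k-2}(\partial M)$, the $V$ term is $O(r^{-\gamma+\delta})$, and $\tilde{\wmac}$ is $O(r^{-1-\epsilon_0})$; choosing $\delta$ small and invoking \eqref{eq:eps-cond} gives $\|g_r(r,\cdot)\|_{H^{k-2}(\partial M)} = O(r^{-1-\epsilon'})$ for some $\epsilon'>0$. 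Integrating in $r$, $\psi(r):=(D_r + 2\lambda)v(r,\cdot)$ converges in $H^{k-2}(\partial M)$ to some $\psi_\infty$ with $\|\psi(r)-\psi_\infty\|_{H^{k-2}} = O(r^{-\epsilon'})$. Solving $(D_r + 2\lambda)v = \psi$ then gives $v(r) = \psi_\infty/(2\lambda) + C_\infty e^{-2i\lambda r} + O(r^{-\epsilon'})$ for a fixed $C_\infty\in H^{k-2}(\partial M)$; but $Q_+u_+$ is frequency-localized near $\nu = \lambda$ in $r$, so $v$ has no $e^{-2i\lambda r}$ component and $C_\infty = 0$ (equivalently, $D_r v(r,\cdot) = o(r^{-1+\delta})\to 0$, so $2\lambda v = \psi - D_r v \to \psi_\infty$). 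Hence $\mathcal L(-\lambda)u_+ = \psi_\infty/(2\lambda)$ exists in $H^{k-2}(\partial M)$; tracking constants through the estimates yields $\|\mathcal L(-\lambda)u_+\|_{H^{k-2}} \le C\|\wmac\|_{\Y_+^{s,\vl_{\max}+1;1,k-1}}$, and combining with the first paragraph gives \eqref{Lm}.

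The main obstacle is that $u_+$ carries only a single order of $\mathcal M_+$-module regularity at $\Rp$, so a priori neither $D_r^2 v$ nor a pointwise-in-$r$ decay rate for $D_r v$ is available; the reorganization into a first-order transport equation for $(D_r+2\lambda)v$ — whose right side is controlled using the one available $\mathcal M_+$-factor, the above-threshold decay $\vl_{\max}+1$ of $\wmac$, and the angular regularity — together with the microlocalization at $\Rp$ (which supplies the outgoing radiation condition $C_\infty = 0$) is what makes the argument go through. The loss of exactly two derivatives, so that the limit is only in $H^{k-2}$ rather than the sharp $H^k$, is caused by the term $r^{-2}\Delta_{h(1/r)}v$; recovering the sharp exponent is the role of the subsequent Propositions, via the adjoint Poisson operator.
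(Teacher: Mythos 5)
Your overall strategy is the one the paper uses: microlocalize near and away from $\Rp$, conjugate away the oscillation and the $r^{-(n-1)/2}$ prefactor, factor the radial part of $H-\lambda^2$ into two first-order operators, and integrate once in $r$ using the above-threshold decay of $\wmac$. However, there is a genuine gap in how you dispose of the piece away from $\Rp$. You claim that because $(\Id-Q_+)u_+$ lies in a weighted space $H^{s',-1/2+\kappa}$ (with angular regularity), multiplication by $r^{(n-1)/2}e^{-i\lambda r}$ yields a function that is $O(r^{-\kappa})$ in $H^{k-2}(\partial M)$ \emph{pointwise in $r$}. Membership in an $L^2$-based weighted space controls only an integral of $\|r^{(n-1)/2}w(r,\cdot)\|^2_{H^{k-2}}$ against a power of $r$ times $dr$ and gives no pointwise information whatsoever; a function in such a space can have large spikes on a sparse set of radii. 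The same unjustified passage from weighted-$L^2$ membership to pointwise bounds recurs in your ``crude bounds'' $\|v(r,\cdot)\|_{H^k}=o(r^\delta)$, $\|D_rv(r,\cdot)\|_{H^{k-1}}=o(r^{-1+\delta})$, and in $\|\tilde\wmac(r,\cdot)\|_{H^{k-1}}=O(r^{-1-\epsilon_0})$. For the terms that are only ever integrated against $dr$ this is repairable: Cauchy--Schwarz converts the weighted $L^2(dr)$ bounds into the $r^{-\epsilon'}L^1(dr;H^{k-2})$ bounds that the integrations actually require, and this is exactly how the paper phrases every step (a pointwise rate appears only as the output of an $L^1$ integration). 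But for $(\Id-Q_+)u_+$ the claim cannot be repaired this way: one must \emph{first} prove that the rescaled limit of that piece exists --- by running the same transport/integration argument on $Q_-u_+$ and on the part microsupported off the radial sets, using that $D_r+\lambda$ is elliptic near $\Rm$, respectively that the small module is elliptic off $\Rp\cup\Rm$ --- and \emph{then} observe that a nonzero limit would contradict the above-threshold membership $H^{s,-1/2+\epsilon}$. As written, your first paragraph asserts the conclusion of that argument without the argument.

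A second, smaller point: your elimination of the homogeneous solution $C_\infty e^{-2i\lambda r}$ of $(D_r+2\lambda)v=\psi$ rests either on the unjustified pointwise bound for $D_rv$ or on an unquantified appeal to ``frequency localization''. The clean version of that appeal is the paper's: since $Q_+u_+$ is microsupported near $\Rp$, the operator $D_r+\lambda$ is elliptic in the scattering sense on that microsupport and can be inverted microlocally, which yields $(D_r-\lambda-\tfrac{i(n-1)}{2r})Q_+u_+$ directly in $\Y^{0,1/2+\epsilon;0,k-2}$ and removes the need to solve the second ODE at all. With that modification and the repair above, your argument becomes essentially the paper's.
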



\begin{proof}
We follow the argument of Section 4 of \cite{ghsz20}, which in turn is based on \cite{me94}. We do this only for $u_+$, as the argument is similar for $u_-$. From now on we denote $u_+$ by $u$. 

We use the microlocalizing operators $Q_\pm$ from Section~\ref{sec:micro}, and complete these to a partition of unity, $\Id = Q_+ + Q_- + Q_3$. Thus, $Q_+$ is microsupported close to $\Rp$, $Q_-$ is microsupported close to $\Rm$, and $Q_3$ is microsupported away from $\Rp \cup \Rm$. Assuming that the microsupports of $Q_\pm$ are sufficiently close to $\mathcal{R}_\pm$, the assumption on $F$ implies that $Q_\pm F$ is in the space $\Y_+^{s, 1/2 + \delta; 1, k-1}$, while $Q_3 F$ is in $\Y_+^{s, 1/2 ; 1, k-1}$. 

We notice that all of the commutators $[H, Q_\bullet]$ have boundary order $1$ and microlocally vanish near $\Rp \cup \Rm$, so $r^2 [H, Q_\bullet]$ belongs in both modules, $\mathcal{M}_+$ and $\mathcal{M}_-$.  We write $u_1 = Q_+ u$, $u_2 = Q_- u$, and $u_3 = Q_3 u$.

Write $\utilde_1 = \chi(r) r^{(n-1)/2} e^{-i\lambda r} u_1$, where $\chi$ is supported in $r > R$ and identically $1$ near $r \geq 2R$. 
Our first goal is to show that $\utilde_1(r,y)$ has a limit $b(y)$ in $H^{k-2}(\partial M)$, and that $\utilde_1(r,y) - b(y) = O_{H^{k-2}(\partial M)}(r^{-\epsilon})$ as $r \to \infty$. 
To do this, we write the operator $H - \lambda^2$ in the form 
\begin{equation}\label{ueqn2}
H - \lambda^2 =  D_r^2  - i (n-1)r^{-1} D_r + r^{-2}
L + r^{-2} \tilde L +V - \lambda^2, 
\end{equation}
where $L$ is a second order differential operator involving only tangential $D_{y_j}$ derivatives, and $\tilde L$ is a scattering differential operator of order $(1,0)$. Since $(H - \lambda^2) u = \wmac$,  we obtain 
\begin{multline}\label{ueqn3}
\Big( D_r + \lambda \Big) \Big( D_r -  \lambda  - \frac{i (n-1)}{2r} \Big) u_1  =  Q_+ \wmac + [H, Q_+] u - r^{-2} \tilde L u_1 \\ + \frac{1}{2r^2} \Big( i (n-1) r(D_r - \lambda)  +(n-1)   - 2L    \Big)u_1 \,  - V u_1  .
\end{multline}
We are going to show that the RHS lies in $\Y^{s, 1/2 + \delta; 0, k-2}$, with the norm in this space bounded by 
\begin{equation}
C  \| \wmac \|_{\Y_+^{s, \vl_{\max} +1; 1, k-1}}.
\label{Fnorm}\end{equation}
This has already been noted for the first term, $Q_+ \wmac$. For the remaining terms, it suffices to bound them by 
\begin{equation}
C \| u \|_{\Y^{s+2, -1/2 - \delta; 1, k-1}}
\label{unorm}\end{equation}
since, as we have seen in \eqref{eq:var spaces modreg}, this is bounded by \eqref{Fnorm}. For the second term, we use the fact that $[H, Q_+]$ is $r^{-2}$ times a small module element, so the $\Y^{s, 1/2 + \delta; 0, k-2}$-norm of this term is bounded by the $\Y^{s+1, -3/2 + \delta; 0, k-1}$ norm of $u$, which is bounded by \eqref{unorm} since $\delta < 1/2$ according to  \eqref{eq:eps-cond}. The next term is very similar: since $r^{-2} \tilde L$ has order $(1, -2)$, the $\Y^{s, 1/2 + \delta; 0, k-2}$-norm of this term is bounded by the $\Y^{s+1, -3/2 + \delta; 0, k-2}$ norm of $u$, which is again bounded by \eqref{unorm}. 
For the term with the $r^{-2}$ prefactor, we observe that 
the differential operator in large parentheses is contained in $\mathcal{M}_+ \cdot \mathcal{N}$, that is, one factor in the large module and one in the small module. It follows that the $\Y^{s, 1/2 + \delta; 0, k-2}$-norm of this term is bounded by the $\Y^{s, -3/2 + \delta; 1, k-1}$ norm of $u_1$, which again is bounded by \eqref{unorm}. Finally, for the $V$ term this follows from the fact that $V$ satisfies the conormal estimates \eqref{Vests}, hence multiplication by $V$ is a scattering pseudodifferential operator of order $(0, -\gamma)$. Recalling \eqref{eq:eps-cond}, we have $-1/2 - \delta + \gamma > 1/2 + \delta$, and it follows that the $\Y^{s, 1/2 + \delta; 0, k-2}$-norm of $Vu_1$ is bounded by the $\Y^{s, -1/2 - \delta; 0, k-2}$-norm of $u_1$, which is bounded by \eqref{unorm}.


Now observe  the operator $D_r + \lambda$ is elliptic everywhere on $\WF'(Q_+)$, since $Q_+$ is microsupported near $\Rp$.  Thus we may write invert this operator microlocally; that is, we can write 
$$
\Id = J (D_r + \lambda) + R',
$$
where $J \in \Psisc^{-1, 0}$ is a microlocal inverse, and the microsupport of the remainder $R'$ is disjoint from $Q_+$. Then for any scattering pseudodifferential operator $A$, $R' A Q_+ u$ can be bounded by (a suitable multiple of) any Sobolev norm of $u$, for example $\| u \|_{s+2, -1/2 - \delta}$,  which in turn is bounded by \eqref{unorm}. 
Applying this operator identity to $A Q_+ u = A u_1$, where $A = (D_r -  \lambda  - i (n-1)/(2r))$, then applying \eqref{ueqn3} and the estimate \eqref{Fnorm} on the RHS of this equation, we find 
\begin{equation}\label{ueqn4}\begin{gathered}
 \Big(D_r -  \lambda - \frac{i(n-1)}{2r}  \Big) u_1  \in \Y^{s, 1/2 + \delta; 0, k-2}, \\
 \Big\|  \Big(D_r -  \lambda - \frac{i(n-1)}{2r}  \Big) u_1 \Big\|_{\Y^{s, 1/2 + \delta; 0, k-2}} \leq C \| \wmac \|_{\Y_+^{s, \vl_{\max} +1; 1, k-1}}.
\end{gathered}\end{equation}
Now, using $s \ge 0$ and observing that 
$$
D_r \utilde_1 = \chi (r) r^{(n-1)/2} e^{-i\lambda r} \Big( D_r -  \lambda -  \frac{i(n-1)}{2r}  \Big) u_1 + (D_r \chi) r^{(n-1)/2} e^{-i\lambda r}  u_1,
$$
we find that $D_r \utilde_1 \in H^{0, 1/2 + \delta - (n-1)/2; 0, k-2}$, or equivalently
$$
D_r \utilde_1 \in  r^{n/2 - 1 - \delta} L^2 \big( ([R, \infty), r^{n-1} dr); H^{k-2}(\partial M) \big),
$$
with a corresponding norm estimate (where we used the support property of $D_r \chi$ for the inclusion in $H^{0, 1/2 + \delta -(n-1)/2; 0, k}$ of the second term). Combining this estimate with the inclusions

\begin{equation} \label{L one inclusion} \begin{gathered}
 r^{n/2 - 1 -\delta} L^2 \big( ([R, \infty), r^{n-1} dr); H^{k-2}(\partial M) \big) \subseteq  r^{-1/2 - \delta} L^2\big( ([R, \infty),  dr); H^{k-2}(\partial M) \big) \\ \subseteq r^{-\epsilon} 
L^1\big( ([R, \infty),  dr); H^{k-2}(\partial M) \big), \qquad 0 < \epsilon < \delta,
\end{gathered}\end{equation}
we find 
\begin{equation} \label{L one inclusion estimate}
\Big\| r^{\epsilon} D_r \utilde_1 \Big\|_{L^1\big( ([R, \infty),  dr); H^{k-2}(\partial M) \big) } \leq C \| \wmac \|_{\Y_+^{s, \vl_{\max} +1; 1, k-1}}.
\end{equation}
We note that, since $\utilde_1$ is locally $H^1$ in $r$ with values in $H^{k-2}(\partial M)$, it is in fact continuous in $r$ with values in $H^{k-2}(\partial M)$. By \eqref{L one inclusion estimate}, we can integrate to infinity to find
$$
b(y) =  \int_R^\infty \partial_{r'} \utilde_1(r', y) \, dr', \quad \| b \|_{H^{k-2}(\partial M)} \leq C \| \wmac \|_{\Y_+^{s, \vl_{\max} +1; 1, k-1}},
$$
is well-defined as an element of $H^{k-2}(\partial M)$. 
Moreover, 
\begin{equation}\begin{gathered}
\utilde_1(r, y) - b(y) = -\int_r^\infty \partial_{r'}  \utilde_1(r', y) \, dr' , \\
\Big\| \utilde_1(r, y) - b(y) \Big\|_{H^{k-2}(\partial M)} \leq C r^{-\epsilon} \Big\| r^{\epsilon} D_r \utilde_1 \Big\|_{L^1\big( ([R, \infty),  dr); H^{k-2}(\partial M) \big) } \\ \leq C r^{-\epsilon}  \| \wmac \|_{\Y_+^{s, \vl_{\max} +1; 1, k-1}}.
\end{gathered}\end{equation}

To treat $u_2$, we apply a similar argument with the role of $\Rp$ and $\Rm$ interchanged. In this case we define $\utilde_2 = \chi(r) r^{(n-1)/2} e^{+i\lambda r} u_2$ and write \eqref{ueqn3} in the form 
\begin{multline}\label{ueqn33}
\Big( D_r - \lambda \Big) \Big( D_r +  \lambda  - \frac{i (n-1)}{2r} \Big) u_2 \\ = Q_- \wmac + [H, Q_-] u_2 + \frac{i (n-1)}{2r^2} (r(D_r + \lambda)) u_+ + \Big(\frac{n-1}{2r^2}  - r^{-2} L  - r^{-2} \tilde L - V \Big) u_+  .
\end{multline}
Folllowing the same reasoning as above, we find that $\utilde_2$ has a limit as $r \to \infty$, with the same $O(r^{-\epsilon})$ of convergence. But  here, the mapping property of the \emph{outgoing} resolvent near the \emph{incoming} radial set (the inverse mapping to \eqref{eq:var spaces modreg}, see \cite[Theorem 3.1]{ghsz20}) shows that $u_2$ is actually in $H^{s, -1/2 +\delta}$, i.e.\ the spatial order is above threshold, since $\vl_+ = -1/2 + \delta$ on the microsupport of $Q_-$. Therefore, 
since we know that $\utilde_2$ has a limit in $H^{k-2}(\partial M)$ as $r \to \infty$, this limit must be zero (were the limit nonzero, then $\tilde{u}_2$ would fail to lie in the space $H^{s, -1/2 +\delta -(n-1)/2}$ for $\delta > 0$). It then follows that $\chi(r) r^{(n-1)/2} e^{-i\lambda r} u_2 = e^{-2i\lambda r} \utilde_2$ also has a zero limit in $H^{k-2}(\partial M)$ as $r \to \infty$, with the same rate of convergence as $\tilde{u}_2$.

It remains to discuss $u_3$. 
We claim that $u_3$ is an element of $\Y_+^{s+2, 1/2 -\delta; 1, k-2}$. We argue separately in the microlocal regions (i) near the characteristic variety $\Sigma$, and (ii) away from $\Sigma$. The mapping property \eqref{eq:var spaces modreg} shows that $u$ is in $\Y_+^{s+2, -1/2 -\delta; 1, k-1}$.

In region (i), since $u_3$ is microsupported away from the radial sets, the small module $\mathcal{N}$ is elliptic there, so we can trade one order of small module regularity for a gain of one spatial order. In region (ii), we already have $u \in \Y_+^{s+2, +1/2 -\delta; 1, k-1}$ since $H - \lambda^2$ is elliptic there and we incur no loss in the spatial regularity from applying the resolvent in this region. 

Now using the $\mathcal{M}_+$ module regularity, and replacing the differential order with zero, we see that 
$$
r\big(D_r - \lambda - i\frac{n-1}{2r} \big) u_3 \in H^{0, 1/2 - \delta; 0, k-2},
$$
that is, 
$$
\big(D_r - \lambda - i\frac{n-1}{2r} \big) u_3 \in H^{0, 3/2 - \delta; 0, k-2},
$$
which is stronger than \eqref{ueqn4} as $\delta < 1/2$. We can thus apply the same reasoning as for $u_1$ to obtain a limits
for $\chi(r) r^{(n-1)/2} e^{\pm i\lambda r} u_3$ (which are necessarily
zero, for the same reason as for $u_2$), with the same rate of convergence.

The estimates in \eqref{Lm} are obtained by adding the contributions from $u_1$, $u_2$ and $u_3$. The estimates \eqref{Lp} are obtained similarly.

\end{proof}

\begin{remark}\label{rem:zerolimits} From the proof above we see the following: if  $Q \in \Psisc^{0,0}$ is microsupported away from $\Rp$, then $\mathcal{L}(\lambda)(Q u_+) = 0$. Similarly,
if $Q' \in \Psisc^{0,0}$ is microsupported away from $\Rm$, then $\mathcal{L}(-\lambda)(Q' u_-) = 0$. Also note that $\mathcal{L}(\lambda)(Q u_+) = 0$ immediately implies $\mathcal{L}(-\lambda)(Q u_+) = 0$. In the same way, $\mathcal{L}(-\lambda)(Q' u_-) = 0$ implies $\mathcal{L}(\lambda)(Q' u_-) = 0$.
\end{remark}

 \begin{proposition}\label{prop:limit-reg} Let $f \in H^k(\partial M)$, with $k \in \NN$, $k \geq 2$, and let $u = P(\lambda) f$. Then $A_- u$ is such that the limit 
 \begin{equation}
\mathcal{L}(-\lambda)(A_- u) = \lim_{r \to \infty} r^{(n-1)/2} e^{i\lambda r}  (A_- u)(r, \cdot) 
 \label{Amuonsphere}\end{equation}
exists in $H^{k-2}(\partial M)$.  Moreover, this limit is $f$. Similarly, the limit 
 \begin{equation}
\mathcal{L}(\lambda)(A_+ u) =  \lim_{r \to \infty}  r^{(n-1)/2} e^{-i\lambda r}  (A_+ u)(r, \cdot) 
 \label{A+limit}\end{equation}
 exists in $H^{k-2}(\partial M)$ as $r \to \infty$. Moreover, this
 limit is in $H^k(\partial M)$. 
 Both limits are achieved with an $O(r^{-\epsilon})$ convergence rate, as in Proposition~\ref{prop:reslimits}. 
 \end{proposition}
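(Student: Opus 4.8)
The plan is to combine the reproducing formula of Proposition~\ref{prop:L2} with the resolvent asymptotics of Proposition~\ref{prop:reslimits}, and then to identify the two limits by relating the maps $F \mapsto \mathcal{L}(-\lambda)(R(\lambda+i0)F)$ and $F \mapsto \mathcal{L}(\lambda)(R(\lambda-i0)F)$ to the adjoint Poisson operators $P(-\lambda)^*$ and $P(\lambda)^*$, using the pairing identities established in Section~\ref{sec:Pois}.

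First I would set $w := [H, A_+] u = [H, A_+] P(\lambda) f$. By Corollary~\ref{thm:cor}, $w \in H^{s, k+1/2}(M)$ for every $s$, $w$ is microlocally trivial in a neighbourhood of $\mathcal{R}_+ \cup \mathcal{R}_-$, and $\| w \|_{H^{s, k+1/2}(M)} \lesssim \| f \|_{H^k(\partial M)}$. Exactly as in the proof of Proposition~\ref{prop:L2} --- using \eqref{eq:poisson con't}, Proposition~\ref{prop:BP}, and the microsupport properties of $A_\pm$ --- one has $A_\pm u \in \X_\pm^{s, \vl_\pm}$; since $(H - \lambda^2) A_\pm u = [H, A_\pm] u = \pm w$ and $R(\lambda \pm i0)$ inverts $H - \lambda^2$ on $\X_\pm^{s, \vl_\pm}$, this gives $A_+ u = R(\lambda + i0) w$ and $A_- u = -R(\lambda - i0) w$. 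Finally, since each generator of $\mathcal{M}_\pm$ of order $(1,1)$ costs at most one power of spatial decay while generators of $\mathcal{N}$ cost none, the decay of $w$ (which is at least $k + 1/2 \ge 5/2$, as $k \ge 2$) together with its rapid decay near the radial sets yields $w \in \Y_\pm^{s, \vl_{\max}+1; 1, k-1}$, with $\| w \|_{\Y_\pm^{s, \vl_{\max}+1; 1, k-1}} \lesssim \| f \|_{H^k(\partial M)}$ for every $s$.

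Proposition~\ref{prop:reslimits}, applied with $\wmac = w$, now shows that $\mathcal{L}(-\lambda)(A_+ u) = \mathcal{L}(-\lambda)(R(\lambda + i0) w)$ and $\mathcal{L}(\lambda)(A_- u) = -\mathcal{L}(\lambda)(R(\lambda - i0) w)$ both exist in $H^{k-2}(\partial M)$, with the $O(r^{-\epsilon'})$ rate asserted in the statement and with $H^{k-2}$-norms bounded by $\| f \|_{H^k(\partial M)}$. To identify these limits I would extend the pairing identities of Section~\ref{sec:Pois}, which for Schwartz $v$ read $\mathcal{L}(\lambda)(R(\lambda - i0) v) = -\tfrac{i}{2\lambda} P(\lambda)^* v$ and $\mathcal{L}(-\lambda)(R(\lambda + i0) v) = \tfrac{i}{2\lambda} P(-\lambda)^* v$, to the function $w$. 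This follows from a density argument: both sides of each identity are continuous in $v$ on $\Y_\pm^{s, \vl_{\max}+1; 1, k-1}$ with values in $L^2(\partial M)$ --- the left sides by Proposition~\ref{prop:reslimits}, the right sides by \eqref{eq:poisson con't better} and its analogue for $P(-\lambda)$ --- and Schwartz functions are dense in $\Y_\pm^{s, \vl_{\max}+1; 1, k-1}$. Hence $\mathcal{L}(\lambda)(A_- u) = \tfrac{i}{2\lambda} P(\lambda)^* w$ and $\mathcal{L}(-\lambda)(A_+ u) = \tfrac{i}{2\lambda} P(-\lambda)^* w$.

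The two assertions then follow. For $A_- u$: by the converse part of Proposition~\ref{prop:L2} applied to $w$, one has $(R(\lambda+i0) - R(\lambda-i0)) w = P(\lambda)\big(\tfrac{i}{2\lambda} P(\lambda)^* w\big)$; but the left side is $A_+ u + A_- u = u = P(\lambda) f$, so injectivity of $P(\lambda)$ gives $\tfrac{i}{2\lambda} P(\lambda)^* w = f$, i.e.\ $\mathcal{L}(\lambda)(A_- u) = f$. For $A_+ u$: choose $Q \in \Psisc^{0,0}$ microlocally the identity on the microsupport of $w$ and microlocally trivial near $\mathcal{R}_+ \cup \mathcal{R}_-$; then $(\Id - Q) w$ is Schwartz, so $P(-\lambda)^* (\Id - Q) w$ is smooth by \eqref{eq:Padsmooth}, while the adjoint of Proposition~\ref{prop:BP} --- which holds verbatim for $P(-\lambda)$ --- gives $P(-\lambda)^* Q w \in H^k(\partial M)$; thus $\mathcal{L}(-\lambda)(A_+ u) = \tfrac{i}{2\lambda} P(-\lambda)^* w \in H^k(\partial M)$. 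The convergence rates in both cases are exactly those furnished by Proposition~\ref{prop:reslimits}. I expect the main obstacle to be the bookkeeping in the second paragraph: confirming the memberships $A_\pm u \in \X_\pm^{s, \vl_\pm}$ and $w \in \Y_\pm^{s, \vl_{\max}+1; 1, k-1}$ with uniform control by $\| f \|_{H^k(\partial M)}$, since this is precisely what licenses both the application of Proposition~\ref{prop:reslimits} and the resolvent-inverse identities $A_\pm u = \pm R(\lambda \pm i0) w$, and it is where the hypothesis $k \ge 2$ enters.
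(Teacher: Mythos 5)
Your overall architecture is close to the paper's: you write $A_\pm u = \pm R(\lambda\pm i0)\,[H,A_+]u$, check that $w:=[H,A_+]u$ lands in the module-regularity space needed for Proposition~\ref{prop:reslimits} (the paper uses $\Y_\pm^{s,\vl_{\max}+1;k,0}$, you use $\Y_\pm^{s,\vl_{\max}+1;1,k-1}$; both work), and deduce existence of the limits with the stated rate. Your treatment of the second limit --- identifying $\mathcal{L}(-\lambda)(A_+u)$ with $\tfrac{i}{2\lambda}P(-\lambda)^*w$ and then splitting $w = Qw + (\Id-Q)w$ to get $H^k$ regularity via \eqref{eq:Padsmooth} and the adjoint of Proposition~\ref{prop:BP} --- is essentially the paper's argument (the paper writes $u = P(-\lambda)f'$ with $f' = cP(-\lambda)^*[H,A_+]u$ and argues as in the converse of Proposition~\ref{prop:L2}). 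The identities $\mathcal{L}(\pm\lambda)\circ R(\lambda\mp i0) = \pm\tfrac{i}{2\lambda}P(\pm\lambda)^*$, which you prove for Schwartz input and extend by density, are correct and are in fact recorded by the authors as a remark immediately after this proposition.

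The genuine problem is your proof that $\mathcal{L}(\lambda)(A_-u) = f$. You derive $P(\lambda)f = P(\lambda)\bigl(\tfrac{i}{2\lambda}P(\lambda)^*w\bigr)$ from the two halves of Proposition~\ref{prop:L2} and then cancel $P(\lambda)$, invoking injectivity of $P(\lambda)$ on $H^k(\partial M)$. That injectivity is nowhere established in the paper for the \emph{extended} operator on rough data: $P(\lambda)$ is defined on $C^\infty(\partial M)$ by the property that $P(\lambda)\phi$ has incoming data $\phi$, and is extended to $L^2$ only by continuity through \eqref{sm}. For the extension, ``$P(\lambda)g=0 \Rightarrow g=0$'' is equivalent to knowing that the incoming data of $P(\lambda)g$ is still $g$ --- which is precisely the statement you are trying to prove. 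So as written the argument is circular (or, at best, imports an external fact about distorted Fourier transforms that the paper's framework does not supply). The fix is the one the paper uses and which you already have all the ingredients for: the identity $\mathcal{L}(\lambda)(A_-P(\lambda)\phi)=\phi$ holds for $\phi\in C^\infty(\partial M)$ by the defining property of the Poisson operator (stationary phase on the Melrose--Zworski kernel), and your uniform estimate $\|\mathcal{L}(\lambda)(A_-u)\|_{H^{k-2}} \lesssim \|w\|_{\Y_-} \lesssim \|f\|_{H^k}$ lets you pass from smooth $f_j\to f$ in $H^k$ to the general case. A second, smaller point: the density of Schwartz functions in the variable-order module-regularity spaces, which your extension of the pairing identity relies on, is asserted without justification; it is true but deserves a sentence, or can be bypassed entirely by running the density argument at the level of the boundary data $f$ as above.
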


 \begin{proof}
 As we have already seen in Proposition~\ref{prop:L2}, $A_-u$ is the incoming resolvent applied to $[H, A_-] u$, and $A_+u$ is the outgoing resolvent applied to $[H, A_+]u = - [H, A_-] u$. 
 We have also seen in Corollary~\ref{thm:cor}  that for $f \in H^k(\partial M)$,  $[H, A_\pm] u$ is in the module regularity space
 $\Y_\pm^{s, \vl_{\max} +1; k, 0}$.  Therefore, the existence of the limits \eqref{Amuonsphere} and \eqref{A+limit} in $H^{k-2}(\partial M)$ follows from  Proposition~\ref{prop:reslimits}. 

We next note that, for $f \in C^\infty(\partial M)$, and $u = P(\lambda) f$,  the limit $\mathcal{L}(-\lambda)(A_- u)$ is exactly $f$. This is a defining property of the Poisson kernel; see \cite[Equations (0.2), (0.4)]{mezw96}. However, we will elaborate on this point, as it is closely related to the form of \eqref{Poisson-explicit}. Suppose, without loss of generality, that $f$ is supported in a small neighbourhood of $b \in \partial M$. Outside any microlocal neighbourhood of $\Rm$, the contribution of $A_- u$ to this limit is zero, as it is in $H^{s, k}$ for every $k$ by Proposition~\ref{prop:BP}. Therefore, given the canonical relation of $P(\lambda)$, see \cite[Propositions 4 and 19]{mezw96}, we can replace $A_- P(\lambda) f$ by $Q P(\lambda) f$ where 
$Q$ is microsupported near $\Rm$ and its kernel is supported near $(b,b)$. The kernel of $Q P(\lambda)$ is then as in \eqref{Poisson-explicit}, that is, 
$$
e^{-i\lambda \cos d_h(y,y')/x} \tilde a(x, y, y') + e(x, y, y'),
$$
except that $\tilde a(x,y, y')$ is now supported close to $(0, b, b)$ but not in a \emph{deleted} neighbourhood, as was the case in \eqref{Poisson-explicit}. In fact we have 
$$\tilde a(0, y, y) = (\lambda/(2\pi))^{(n-1)/2} e^{-i(n-1)\pi/4} \text{ for $y$ near $b$. }$$
Then, in normal coordinates around $y$, we have 
\begin{equation*}\begin{gathered}
\cos d_h(y, y') = 1 - \frac{|y-y'|^2}{2} + O(|y-y'|^3), \\ 
dh(y') = dy'(1 + O(|y-y'|)),
\end{gathered}\end{equation*}
the stationary phase lemma shows that indeed $\mathcal{L}(-\lambda)(Q P(\lambda) f)(y) = f(y)$ in a neighbourhood of $b$. 
 
 Now for arbitrary $f \in H^k(\partial M)$, we choose a sequence of smooth $f_j$ converging to $f$ in $H^k(\partial M)$. Let $u_j = P(\lambda) f_j$; then $[H, A_-]u_j$ converges to $[H, A_-] u$ in $\Y_-^{s, \vl_{\max} +1; 1, k-1}$ using Corollary~\ref{thm:cor}. As above, we have $A_- u_j = R(\lambda - i0) [H, A_-] u_j$. 
 Using \eqref{Lm} in Proposition~\ref{prop:reslimits},  $\lim_{j \to \infty} \mathcal{L}(-\lambda) A_- u_j$ exists in $H^{k-2}(\partial M)$ and is $\mathcal{L}(-\lambda) A_- u$. On the other hand, since the $f_j$ are smooth, $\mathcal{L}(-\lambda) A_- u_j$ is precisely $f_j$, which converge to $f$ in $H^k$, and so \emph{a fortiori} in $H^{k-2}$, showing that $\mathcal{L}(-\lambda) A_- u = f$.

 To obtain the result for $A_+ u$, we could just appeal to the main result of \cite{mezw96} that says that the limit is the scattering matrix $S(\lambda)$ applied to $f$, and since $S(\lambda)$ is an FIO of order zero, then $S(\lambda) f$ is in $H^k$. However, we prefer a direct argument. Using the formula \eqref{sm}, we see that $u$ can be expressed as 
 $$
u = \frac1{2\lambda i} P(-\lambda) P(-\lambda)^* [H, A_+] u.  $$
 That is, $u$ is equal to $P(-\lambda) f'$, where $f' =  (2\lambda i)^{-1} P(-\lambda)^* [H, A_+] u$. Now, arguing as in the proof of the converse to Proposition~\ref{prop:L2}, but for $P(-\lambda)$ instead of $P(\lambda)$, we see that $f'$ is in $H^k(\partial M)$. 
Now applying the argument in the first half of this proof, with signs switched, we conclude that the limit \eqref{A+limit} exists in $H^{k-2}(\partial M)$ and is equal to $f' \in H^k(\partial M)$.

The statement about the convergence rate is shown by applying Proposition~\ref{prop:reslimits} to $F = [H, A_\pm] u$. 
\end{proof}

\begin{remark} The previous proof shows that the operators $\mathcal{L}(\pm \lambda) \circ R(\lambda \pm i0)$ in Proposition~\ref{prop:reslimits} coincide with $\pm (2i\lambda)^{-1} P(\mp \lambda)^*$. Also, we remark that $\mathcal{L}(\lambda) A_+ P(\lambda) f$, the limit in \eqref{A+limit}, is precisely $S_{\mathrm{lin}}(\lambda) f$, the linear scattering matrix applied to $f$. 
\end{remark}

\begin{remark}\label{rem:conv} There is a subtlety here: the limiting functions in \eqref{Amuonsphere} and \eqref{A+limit} are more regular than one would expect based on the topology of convergence.
 In fact, the convergence does \emph{not} take place, in general, in the topology of $H^k(\partial M)$. To see this, consider the operator that maps
 $f$ to $r_0^{(n-1)/2} P(\lambda) f$ restricted to $\{ r = r_0 \}$. This  is a semiclassical FIO of order zero on $\partial M$ (with $1/r_0$ playing the role of semiclassical parameter)
 but the canonical relation has fold singularities. This is best seen in the case of flat Euclidean space, where the phase function of this FIO 
 is $\Phi(\hat z, \omega) = - \lambda \hat z \cdot \omega$, $z = r \hat z$, $\hat z, \omega \in \mathbb{S}^{n-1}$.
 Such an FIO cannot be expected to be (and is not) bounded on $H^k$ uniformly in $r_0$, and hence,
 convergence cannot be expected to take place, even weakly, in $H^k$ as $r_0 \to \infty$. 
 \end{remark}

 We combine the previous two propositions to obtain
 \begin{proposition}\label{prop:reg}
 The limits in Proposition~\ref{prop:reslimits} lie in $H^k(\partial M)$. 
 \end{proposition}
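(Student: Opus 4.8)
The plan is to identify the maps of Proposition~\ref{prop:reslimits} with the adjoint Poisson operators, and then establish a sharp module-regularity mapping property for those adjoints.

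By the remark following Proposition~\ref{prop:limit-reg} we have $\mathcal{L}(\mp\lambda)\circ R(\lambda\pm i0)=P(\mp\lambda)^*$, so the claim is that $P(-\lambda)^*$ maps $\Y_+^{s,\vl_{\max}+1;1,k-1}(M)$ into $H^k(\partial M)$, and likewise $P(\lambda)^*$ maps $\Y_-^{s,\vl_{\max}+1;1,k-1}(M)$ into $H^k(\partial M)$; the two cases are symmetric, so I treat $P(-\lambda)^*$. The key structural observation is that $\mathcal N=\mathcal M_+\cap\mathcal M_-\subseteq\mathcal M_+$, so $\mathcal N^k=\mathcal N^{k-1}\mathcal N\subseteq\mathcal N^{k-1}\mathcal M_+$; hence $F\in\Y_+^{s,\vl_{\max}+1;1,k-1}$ already implies $\mathcal N^k F\subset H^{s,\vl_{\max}+1}(M)$, i.e.\ $F\in\Y_+^{s,\vl_{\max}+1;0,k}$, a space of pure small-module regularity of order $k$. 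Thus the one unit of large-module regularity may be traded for one extra unit of small-module regularity, and it suffices to prove
\begin{equation*}
P(-\lambda)^*\colon \Y_+^{s,\vl_{\max}+1;0,k}(M)\lra H^k(\partial M).
\end{equation*}
This also accounts, a posteriori, for the fact that it is the \emph{small} module, and not the large one, that governs the Sobolev order of the scattering matrix.

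I would prove the displayed mapping property by induction on $k$. For $k=0$ it is exactly $P(-\lambda)^*\colon H^{s,\vl_{\max}+1}(M)\to L^2(\partial M)$ from \eqref{eq:poisson con't better} (the analogue for $P(-\lambda)$ holding by the same argument via \eqref{sm}). For the inductive step, one uses that the Poisson operator conjugates the small module $\mathcal N$ on $M$ into first order differential operators on $\partial M$: for each $N\in\mathcal N$ there are a first order differential operator $N^\sharp$ on $\partial M$, an operator $B\in\Psisc^{0,0}(M)$, and a smoothing operator $E$ with $N\,P(-\lambda)=P(-\lambda)\,N^\sharp+B\,P(-\lambda)+E$, and as $N$ ranges over $\mathcal N$ the $N^\sharp$ generate $\mathrm{Diff}^1(\partial M)$ (in the Euclidean case with $N$ a generator of rotations this is exactly \eqref{eq:Poisson commute}). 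Dualizing (using $\Psisc^{0,0}(M)\subseteq\mathcal N$ and that $\mathcal N$ is closed under adjoints modulo $\Psisc^{0,0}$), an angular derivative of $P(-\lambda)^*u$ equals $P(-\lambda)^*$ applied to an element of $\mathcal N u$, up to a $\Psisc^{0,0}(M)$-factor and a smoothing term. Iterating $k$ times bounds $\|P(-\lambda)^*u\|_{H^k(\partial M)}$ by a finite sum of terms $\|P(-\lambda)^*(N_1\cdots N_j u)\|_{L^2(\partial M)}$ with $N_i\in\mathcal N$, $j\le k$, plus contributions in $H^\infty(\partial M)$; each term is then controlled by $\|u\|_{\Y_+^{s,\vl_{\max}+1;0,k}}$, using the base case together with the definition of $\Y_+^{s,\vl_{\max}+1;0,k}$ and the boundedness of $\Psisc^{0,0}(M)$ on these spaces. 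Applying this with $u=F$ gives $\mathcal L(-\lambda)u_+=P(-\lambda)^*F\in H^k(\partial M)$; the rate of convergence is already supplied by Proposition~\ref{prop:reslimits} and is unchanged.

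The main obstacle is the intertwining relation used in the inductive step — showing that the Melrose--Zworski Poisson kernel commutes with angular derivatives up to terms of the form $\Psisc^{0,0}(M)\circ P(-\lambda)$ plus smoothing. This cannot be obtained from the crude mapping properties of the Poisson operator; indeed the naive adjoint estimate loses half an order of decay each time an angular derivative is added, which goes the wrong way. One instead uses the oscillatory integral representation of the Poisson kernel from \cite{mezw96}, exactly as in the proof of Proposition~\ref{prop:BP}: near the radial sets $QP(-\lambda)$ has kernel $e^{\pm i\lambda\cos d_h(y,y')/x}\tilde a(x,y,y')$ modulo a rapidly decaying error, and one checks that $D_y+D_{y'}$ maps this class to itself with amplitude of the same order, the point being that the contribution of $D_y+D_{y'}$ to the phase $\cos d_h(y,y')$ vanishes on the diagonal $y=y'$; away from $\mathcal R_+\cup\mathcal R_-$ the same statement is immediate from Proposition~\ref{prop:BP} itself. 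Summing the contributions from the two regions completes the argument, and the estimates \eqref{Lm}, \eqref{Lp} upgrade accordingly to give $b\in H^k(\partial M)$.
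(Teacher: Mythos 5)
Your overall architecture---identify $\mathcal{L}(\mp\lambda)\circ R(\lambda\pm i0)$ with $P(\mp\lambda)^*$, absorb the single unit of large-module regularity into an extra unit of small-module regularity, and reduce to the mapping property $P(\mp\lambda)^*\colon \Y^{s,\vl_{\max}+1;0,k}\to H^k(\partial M)$---is reasonable, and it is a genuinely different route from the paper's. The paper instead shows that $u=(R(\lambda+i0)-R(\lambda-i0))F$ lies in $H^{s+2,k-1/2}$ microlocally away from $\Rp\cup\Rm$ (trading small-module regularity for decay, since $\mathcal{N}$ is elliptic off the radial sets), feeds $[H,A_+]u$ into the converse part of Proposition~\ref{prop:L2} to conclude $u=P(\lambda)f$ with $f\in H^k(\partial M)$, and then applies Proposition~\ref{prop:limit-reg} together with Remark~\ref{rem:zerolimits}. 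The only FIO input there is the already-established Proposition~\ref{prop:BP} and its adjoint, applied to data microlocalized \emph{away} from the radial sets with enhanced decay; nothing is ever commuted through the Poisson operator near $\Rp\cup\Rm$.

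The gap in your version is the intertwining lemma $NP(-\lambda)=P(-\lambda)N^\sharp+BP(-\lambda)+E$ with $B\in\Psisc^{0,0}$, which you rightly flag as the main obstacle but whose justification fails as written. Near $\Rm$ one has $(D_{y_j}+D_{y'_j})e^{-i\lambda\cos d_h(y,y')/x}=-\lambda x^{-1}\bigl((\partial_{y_j}+\partial_{y'_j})\cos d_h\bigr)e^{-i\lambda\cos d_h(y,y')/x}$, and although $(\partial_{y_j}+\partial_{y'_j})\cos d_h$ vanishes at the diagonal, it vanishes only to order $d_h^2\sim|\nu+\lambda|$; multiplied by $x^{-1}=r$ this gives a factor $O\bigl(r|\nu+\lambda|\bigr)$, which is unbounded on the punctured neighbourhood of $\Rm$ supporting the kernel. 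The correction is therefore not of the form $BP(-\lambda)$ with $B\in\Psisc^{0,0}$ but is rather the symbol of an element of $\mathcal{M}_-$ applied to the kernel---one order worse spatially, vanishing only on $\Rm$ itself---and this occurs precisely in the region where your induction must do its work (in the flat case the term is absent only because rotations are exact symmetries of $H_0$). You would additionally need to show that, as $N$ ranges over $\mathcal{N}$, the operators $N^\sharp$ furnish an elliptic family of first-order operators on $\partial M$, which requires the explicit canonical relation of the Poisson kernel, and you would need to treat the part of the kernel away from the simple oscillatory region (the conic points of the Legendrian pair). All of this amounts to establishing module regularity of the Melrose--Zworski Poisson kernel in the spirit of \cite{HMV04}; it is plausible but unproven here, and it is exactly the work the paper's argument is designed to avoid.
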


 \begin{proof} In the notation of  Proposition~\ref{prop:reslimits}, we have $F \in \Y_\pm^{s, \vl_{\text{max}} + 1; 1, k-1}$, and therefore it lies in both of the variable order module regularity spaces $\Y^{s, \vl_\pm +1; 0, k}$. It follows that $u_\pm  \defeq R(\lambda \pm i0) F$ lies in $\X^{s+2, \vl_\pm; 0, k}$. In particular, $u \defeq u_+ - u_-$ is in $H^{s+2, k-1/2}$ microlocally away from small  neighbourhoods of $\Rp$ and $\Rm$, since $\vl_\pm = - 1/2$ there. (Notice that, on the characteristic set but away from the radial sets, the small module is elliptic, so small module regularity of order $k$ affords $k$ orders of spatial regularity. Away from the characteristic set, $u$ is rapidly decaying by microlocal ellipticity, since $(H - \lambda^2)u = 0$.) 
 
 We next observe that, by \eqref{sm},  $u = P(\lambda) f'$ with $f' = (2\lambda i)^{-1} P(\lambda)^* F$. Using \eqref{eq:poisson con't better} we see that $f' \in L^2$.  Applying Proposition~\ref{prop:L2}, with $k=0$, we have $u = (R(\lambda + i0) - R(\lambda - i0)) [H, A_+] u$. But then, since $[H, A_+] u$ is in $H^{s+1, k+1/2}$, and microlocalized away from $\Rp$ and $\Rm$, Proposition~\ref{prop:L2} shows that $u = P(\lambda) f$ with $f \in H^k(\partial M)$. 
 
Consider the limit $\mathcal{L}(\lambda)u_+$. We claim that is the same as $\mathcal{L}(\lambda) A_+ u$. In fact, the difference is 
\begin{equation*}\begin{gathered}
\mathcal{L}(\lambda)(u_+ - A_+ u) \\
= \mathcal{L}(\lambda)(u_+ - A_+ u_+ - A_+ u_-) \\
= \mathcal{L}(\lambda)(A_-u_+  - A_+ u_-).
\end{gathered}\end{equation*}
 Using Remark~\ref{rem:zerolimits}, we see that $\mathcal{L}(\lambda) A_- u_+ = 0$ and $\mathcal{L}(-\lambda) A_+ u_- = 0$, which implies that $\mathcal{L}(\lambda) A_+ u_- = 0$. Thus 
 $\mathcal{L}(\lambda)u_+=\mathcal{L}(\lambda) A_+ u$. Similarly, $\mathcal{L}(-\lambda)u_-=\mathcal{L}(-\lambda) A_- u$.  The conclusion then follows from applying Proposition~\ref{prop:limit-reg} to $u$.   
 \end{proof}

\section{Proof of the main theorem}\label{sec:proof}
We now elaborate on the construction and regularity of nonlinear
Helmholtz eigenfunction $u$, whose asymptotic behavior is the subject
of the main theorem.  We begin by discussing (linear) generalized
eigenfunctions.

Given $f \in H^k(\partial M)$, we let $u_0 = P(\lambda) f$ and decompose using Proposition~\ref{prop:L2} into 
$$
u_0 = u_- + u_+ , \qquad u_\pm = A_\pm u_0 \in \X_\pm^{s+2, -1/2 - \delta; k, 0}(M)
$$
where $A_\pm$ are as in \eqref{eq:main commutator}. According to Proposition~\ref{prop:limit-reg},
we have 
\begin{equation}
  \label{eq:free ass}
u_0 = u_- + u_+   = r^{-(n-1)/2} \Big( e^{-i\lambda r} f(y) + e^{+i\lambda r}
b_0(y) + O_{H^{k - 2}}(r^{-\epsilon}) \Big),
\end{equation}
where $b_0$ is in $H^k(\partial M)$. 

To address the nonlinear problem, following \cite{ghsz20}, we obtain a nonlinear Helmholtz eigenfunction $u$ satisfying
\begin{equation}
  \begin{gathered}
    u = u_- + w, \quad u \mbox{ solves \eqref{eq:equation}}, \\
    u_- \in \X_-^{s+2, -1/2 -\delta; 1, k-1}, \quad w \in
\X_+^{s+2, -1/2 -\delta; 1, k-1}, \quad s \in \NN,\\
    w = u_+ + R(\lambda + i0)N[u_- + w].
  \end{gathered}
\label{upmw} \end{equation}
Moreover, if the nonlinearity $N$ involves products of
degree $p$, then, as described in detail in Section 4.2 of
\cite{ghsz20}, 
$$
N[u_- + w] \in H_+^{s, \ell'; 1, k-1} 
$$
provided
\begin{equation}
\ell' \le 
\frac{(p-1)(n-1)}{2} - \frac 32 - p\delta. \label{eq:4}
\end{equation}
The contraction mapping argument which produces this $w$ requires that
$\ell' = 1/2 - \delta$ for the same $\delta$ appearing in
\eqref{eq:decay function} and \eqref{eq:eps-cond}, 
whence the bound for $p$ in \eqref{pkcond}, which in fact allows for
$\ell' = 1/2 + \delta$ for $\delta >0$ sufficiently small to satisfy
all the above conditions.  Thus $F = N[u_- + w]$ satisfies the
  assumptions of Proposition \ref{prop:reslimits}.  We conclude that 
\begin{equation}
w - u_+ =  r^{-(n-1)/2} e^{i \lambda r} \Big(b_1(y) + O_{H^{k -2}}(r^{-\epsilon}) \Big) , \quad b_1 \in H^k(\partial M), 
\label{w}\end{equation}
using Proposition~\ref{prop:reg} for the regularity of $b_1$. 
Combining $u = u_0 + (w - u_+)$ using \eqref{eq:free ass} and \eqref{w} proves the asymptotic behavior stated in the main theorem,
with $b = b_0 + b_1 \in H^k(\partial M)$.

Now we assume we are given $k \in \mathbb{N}$, $k > (n -1)/2$, and $f \in
H^k(\partial M)$ with $\| f
\|_{H^k(\partial M)} < c$, as in the statement of Theorem
\ref{thm:main2}. In addition, we suppose 
$$
(p-1)(n -1)/2 > 3
$$
and that $N[u]$ only involves derivatives of $u$ and $\overline{u}$ up to order one. 
Then by \eqref{eq:4} we can take $\ell' = 3/2 + \delta$, which is to say we obtain $w$ with
$$
N[u_- + w] \in H_+^{s+1, 3/2 + \delta; 1, k-1}  \subset H_+^{s, 1/2 +
  \delta; 2, k - 1} \subset H_+^{s, 1/2 +
  \delta; 1, k}.
$$
The first containment is because we can exchange one order of scattering differentiability plus one order of spatial decay, and gain one order of module regularity. 
Then applying \eqref{eq:res map mod reg}, we see that $w-u_+$ is in the better space $\X_+^{s+2, -1/2 -\delta; 1, k}$, that is, one additional order of small module regularity compared to \eqref{upmw}. 
Applying Propositions \ref{prop:reslimits} and \ref{prop:reg}, we have
\begin{equation*}
w - u_+ =  r^{-(n-1)/2} e^{i \lambda r} \Big(b_1(y) + O_{H^{k -
    1}}(r^{-\epsilon}) \Big) , \quad b_1 \in H^{k + 1}(\partial M), 
\end{equation*}
Thus under the stronger assumption on $p$, the decomposition $b = b_0
+ b_1$ holds with $b_0 = S_{\mathrm{lin}}(\lambda)f \in H^k$ and $b_1 \in H^{k + 1}$.  If $f \in
H^{k + j}(\partial M)$ (together with the $H^k$ smallness assumption
on $f$) then $u_\pm \in H_\pm^{s, 1/2 - \delta; k + j, 0}$ and by a
bootstrap argument we get $b_1\in H^{k + j+1}(\partial M)$.  In
particular, since $S_{\mathrm{lin}}(\lambda)$ is an FIO of order zero \cite{mezw96}, if $f$ is in $C^\infty(\partial M)$ then $b= S_{\mathrm{lin}}(\lambda)f + b_1$ is also in $C^\infty(\partial M)$.

Uniqueness follows from the same considerations as in \cite{ghsz20}.
Namely, given $u_- \in \mathcal{X}_-^{s+2, -1/2 - \delta; 1, k-1}$, as the
function $w$ above is produced using a contraction mapping on $\mathcal{X}_-^{s+2, -1/2 - \delta; 1, k-1}$ , $w = u - u_-$ is
uniquely determined in a small ball in this space.

\end{document}